\numberwithin{equation}{section}
\providecommand{\U}[1]{\protect\rule{.1in}{.1in}}
\newtheorem{theorem}{Theorem}[section]
\newtheorem{corollary}{Corollary}
\newtheorem{definition}{Definition}[section]
\newtheorem{lemma}{Lemma}[section]
\newenvironment{proof}[1][Proof]{\noindent \textbf{#1.} }{\hfill
	\rule{0.5em}{0.5em}}
\newcolumntype{C}[1]{>{\centering\arraybackslash}m{#1}}
\begin{document}

\title{Value-at-Risk, Tail Value-at-Risk and upper tail transform of the sum of two counter-monotonic random variables}
\date{}
\author{
Hamza Hanbali \\ \small Monash University\\ \small \texttt{hamza.hanbali@monash.edu} \and 
 Dani\"{e}l Linders \\ \small University of Amsterdam. \\ \small \texttt{dlinders@illinois.edu} \and Jan Dhaene \\ \small KU Leuven \\ \small \texttt{jan.dhaene@kuleuven.be}}
\maketitle
\begin{abstract}
	The Value-at-Risk (VaR) of comonotonic sums can be decomposed into marginal VaR's at the same level. This additivity property allows to derive useful decompositions for other risk measures. In particular, the Tail Value-at-Risk (TVaR) and the upper tail transform of comonotonic sums can be written as the sum of their corresponding marginal risk measures.	

	The other extreme dependence situation, involving the sum of two arbitrary counter-monotonic random variables, presents a certain number of challenges. One of them is that it is not straightforward to express the VaR of a counter-monotonic sum in terms of the VaR's of the marginal components of the sum. This paper generalizes the results derived in \cite{Chaoubi} by providing decomposition formulas for the VaR, TVaR and the stop-loss transform of the sum of two arbitrary counter-monotonic random variables.
 
 \bigskip
 \noindent
 \textbf{Keywords:} Counter-monotonicity, extreme negative dependence, decomposition formulas, Tail Value-at-Risk, stop-loss transform.
\end{abstract}
\newpage
\section{Introduction}
\label{Introduction} 
This paper provides decompositions for the Value-at-Risk (VaR), Tail Value-at-Risk (TVaR), and the upper tail transform (or stop-loss premium), of the sum of two counter-monotonic random variables with arbitrary marginal distributions. It is shown that the VaR can be expressed in terms of marginal quantiles but at different levels. Further, it is shown that the TVaR and the upper tail transform can be expressed in terms of linear combinations of the marginal risk measures, with additional terms required when the underlying random variables are discrete. The uniqueness of the representation of the VaR and the TVaR in terms of marginal risk measures is however not guaranteed in general.

The results of the present paper are useful in situations involving sums or differences of two random variables. These situations underlie a broad range of practical actuarial and financial problems. In an insurance context, asset and liability management requires the analysis of a difference of two random variables. The liability side of the balance sheet is in major part determined by actuarial risks, whereas the asset side, on the other hand, is a mirror image of the insurer's investment strategy \citep{Wuthrich}. Other situations include the evaluation of basket and spread options, whose payoffs are expressed as stop-loss premiums of sums, or differences, of random variables \citep{DhaeneGoovaerts,CarmonaDurrleman2003,LaurenceWang2008,LaurenceWang2009}. The complex instruments from the emerging market of longevity and mortality derivatives, such as longevity trend bonds or catastrophic mortality bonds, also involve differences of random variables \citep{HuntBlake2015,ChenMacMinnSun2015,BahlSabanis}, and so does the management of basis risk \citep{Coughlaan2011,Zhang2017}.

Determining the distribution and risk measures for linear combinations of random variables requires modeling their joint distribution. As pointed out in a number of earlier related studies (see e.g.\ \cite{Kaas_etal:Upper_Lower_Bounds}, \cite{BernardJBF,Bignozzi_et_al_2015}), modeling the individual risks, or marginal distributions, is often considered a standard task by actuaries and risk managers, whereas a more challenging task consists in determining their dependence structure. Indeed, an extensive statistical and quantitative toolbox is available to model separately actuarial risks, such as future mortality dynamics or claim counts in non-life insurance, and financial risks, such as future returns of an investment portfolio. However, the joint behavior of actuarial and financial risks is hard to estimate, and it is not uncommon that the problem is simplified by assuming an independence structure.

A reasonable risk management strategy consists in analyzing the most extreme situations for the joint behavior of a random vector with given marginal distributions. For sums of two random variables, these extreme situations are modeled by comonotonicity (i.e.\ extreme positive dependence) and counter-monotonicity (i.e.\ extreme negative dependence). In the framework of convex ordering, comonotonicity corresponds to the worst-case scenario and counter-monotonicity corresponds to the best-case scenario \citep{DenuitDhaeneGoovaertsKaas2005,ShakedShanthihumar,Cheung_Lo_2014}. Therefore, the TVaR and the upper tail transform of the random sum will reach their maximal and minimal value in case the random variables are comonotonic and counter-monotonic, respectively. Under the assumption of extreme dependence structures, the TVaR and the upper tail transform can be used to assess the diversification benefit from combining different risks or investment opportunities \citep{Embrechts2009,Embrechts2013b}. Further, since even the most hardened modelers are not exempt from model risk, it is prudent to use sharp comonotonic and counter-monotonic bounds which are consistent with the available information on the univariate distributions \citep{Dhaene2000,Kaas_etal:Upper_Lower_Bounds}. These bounds are also informative on the extent of dependence model risk, which can be quantified using the dependence uncertainty spread, i.e.\ the difference between the upper and lower bounds \citep{Embrechtal2015,Bignozzi_et_al_2015}. In particular, the knowledge about the marginal distributions does not unambiguously determine the value of a risk measure, since infinitely many values can be obtained from the spectrum of all possible dependence structures. Hence, the dependence uncertainty spread allows insurers and risk managers to evaluate their vulnerability to the choice of a dependence structure\footnote{A substantial amount of research has been devoted to derive alternative analytical and numerical worst- and best-case bounds. For the VaR, these bounds are necessary because comonotonic and counter-monotonic VaR's are not always worst- and best-case bounds. Some notable contributions in this direction are due to \cite{Mesfioui}, \cite{WangPengYang2013}, \cite{Embrechts2013b}, \cite{BernardJRI}, and more recently \cite{Luxa}. Approaches tailored for the TVaR or for distortion risk measures that incorporate additional information can be found in \cite{Kaas_etal:Upper_Lower_Bounds}, \cite{Puccetti2013} and \cite{Bernardetal2014}. Nevertheless, a trade-off arises from this discussion. Including more information would indeed tighten the bounds, but it could also add to the dependence model risk, because of the uncertainty that stems from that information. In contrast, the comonotonic and counter-monotonic bounds provide more safety in the analysis of the dependence uncertainty spread, at the cost of a wider interval.}.

The decompositions of the TVaR and the stop-loss premium of comonotonic sums are standard results in actuarial science \citep{MeileisonNadas1979,Dhaene2000,Dhaene_etal_2006,Hobson2005,Chen2008}. A key property underpinning these results is the additivity of quantiles of comonotonic sums, which is itself due to the non-decreasing properties of quantiles \citep{Dhaene2002a,McNeil}. Solving the corresponding problem involving extreme negative dependence is more challenging. In the bivariate case\footnote{Characterizing extreme negative dependence for more than two random variables is still an active area of research. One of the proposed notions is complete mixability, which minimizes the variance of sums of random variables and coincides with counter-monotonicity in the bivariate case; see \cite{GaffkeRuschendorf} for the foundational principles and \cite{Wang_wANG_2011cm,WangPengYang2013} and references therein for additional results pertaining to VaR minimization. \cite{Dhaene_Denuit_1999} proposed mutual exclusivity, which leads to convex lower bounds for some classes of multivariate distributions with given marginals. \cite{Cheung_Lo_2014} provide details on the intrinsic relationship between pairwise counter-monotonicity and multivariate mutual exclusivity. Other proposals for multivariate extreme negative dependence include \cite{LeeAhn_2014}'s $d$-counter-monotonicity and \cite{PuccettiWang}'s $\Sigma$-counter-monotonicity; see also \cite{WangWang2015,WangWang2016}. Nevertheless, this fertile literature has scarcely addressed the decomposition of the TVaR and the stop-loss premiums. A noteworthy result is due to \cite{Dhaene_Denuit_1999} who proved the additivity of stop-loss premiums for sums of mutually exclusive risks under some conditions on the marginal distributions; see also \cite{CheungLo2013} for other convex functionals of sums, as well as \cite{Cheung2017} for tail mutual exclusivity.}, which is of interest to the present paper, deriving explicit decomposition formulas for the VaR, TVaR, and stop-loss premium of the sum of counter-monotonic random variables with arbitrary marginal distributions remains an open problem. 

For the equivalent problem of comonotonic differences, \cite{LaurenceWang2009} derive a decomposition for stop-loss premiums in the case of two continuous random variables, but they do not provide closed-form expressions for the corresponding retentions. \cite{Chaoubi} also restrict their analysis to continuous random variables with a specific choice of marginal distributions, including symmetric or unimodal distributions. When the random variables follow such marginal distributions, the function defined as the sum of quantiles of counter-monotonic random variables have appealing properties. In particular, it is continuous, and either strictly monotone or globally convex/concave. However, this paper shows that many situations involve functions with complex shapes which are not covered in the existing literature. A typical case is when one of the random variables is discrete (e.g.\ number of survivors) whereas the other is continuous (e.g.\ investment fund value). Another one is when the marginal distributions have fat tails or are skewed. In such situations, the existing decompositions are not valid. \cite{HanbaliLinders} provide decompositions for the VaR of comonotonic differences under more general assumptions using the concept of tail monotone functions, where the decomposition holds for certain values of the quantile level. Their results can readily be applied to decompose the TVaR as well, but only at confidence levels beyond some thresholds.

The present paper contributes to the body of literature devoted to the decomposition of risk measures of sums of two random variables \citep{Dhaene_Denuit_1999,CheungLo2013,Cheung2017}. The paper extends the work of \cite{LaurenceWang2009} and \cite{Chaoubi} by deriving explicit decompositions of the VaR, TVaR and stop-loss premium of counter-monotonic sums with general marginal distributions. Whereas the decomposition of the VaR is a natural result that follows from the basic definitions, the decomposition of TVaR's and stop-loss premiums are more challenging. This is due to the fact that the sum of quantiles of two counter-monotonic random variables may exhibit complex features, such as jumps (and hence, does not fall in the framework considered in \cite{LaurenceWang2009} and \cite{Chaoubi} with continuous random variables) or multiple inflection points (and hence, for VaR and TVaR, violates \cite{Chaoubi}'s convex/concave assumptions).

A standard result in the literature is that the quantile of a random variable transformed by a monotone function can be expressed as the quantile of the original function transformed using that function \citep{Dhaene2002a}. For instance, for a continuous and non-decreasing function $g$ and a random variable $X$, it holds that $F^{-1}_{g(X)}(p)=g\left(F^{-1}_{X}(p)\right)$ for all $p\in(0,1)$. The situation where $g$ is not monotone is problematic. \cite{HanbaliLinders} address the problem and show that the equality $F^{-1}_{g(X)}(p)=g\left(F^{-1}_{X}(p)\right)$ holds for specific values of $p$ if the function $g$ has a monotone tail. The present paper adds to their insights by showing that if $g$ is not monotone, then that equality does not necessarily hold. Further, when it does hold, there does not always exist a unique $p^{\star}$ such that $F^{-1}_{g(X)}(p)=g\left(F^{-1}_{X}(p^\star)\right)$.

The remainder of the paper is organized as follows. Section \ref{Sec:Preliminaries} introduces the notations and relevant definitions. A definition of the companion comonotonicity concept is also provided, as well as the analogous results available in the literature. Section \ref{Sec:Countermonotonicity} contains a formal definition of counter-monotonicity. The main problem of the paper is exposed, and some key elements for the subsequent derivations are provided, together with some numerical examples. The main results of the paper are derived in Sections \ref{Sec:VaR}, \ref{Sec:TVaR} and \ref{Sec:SL}, dealing with the decompositions of the VaR, TVaR and the stop-loss premium, respectively. Each section contains the main results, followed by a discussion and numerical illustrations. Section \ref{Sec:Conclusion} concludes the paper. In order to ease the reading, all proofs are relegated to the appendix.

\section{Preliminaries} \label{Sec:Preliminaries}
\subsection{Notations}
All random variables are defined on a common probability space $\left(\Omega,\mathcal{F},
\mathbb{P} \right)$. They can be discrete, continuous, or a combination of the two. It is assumed that all random variables are such that the risk measures introduced hereafter are finite. 

The cumulative distribution function (cdf) of a random variable
 $X$ is denoted by $F_X$. The left inverse of $F_X$ is denoted by $F_X^{-1}$, and is defined as $
F_{X}^{-1}\left(  p\right)  =\inf\left\{  x\in\mathbb{R}\mid F_{X}(x)\geq
p\right\}$, for $p\in\left[  0,1\right],$ with $\inf\emptyset=+\infty$ by convention. The Value-at-Risk (VaR) of a random variable $X$ at the level $p\in[0,1]$ is given by $\text{VaR}_p[X]=F^{-1}_X(p)$. The right inverse of $F_X$ is denoted by $F_X^{-1+}$, and is defined as $
F_{X}^{-1+}\left(  p\right)  =\sup\left\{  x\in\mathbb{R}\mid F_{X}(x)\leq
p\right\},$ for $p\in\left[  0,1\right],$ with $\sup\emptyset=-\infty$. If \textbf{$F_X$ is continuous and strictly increasing}, the left and right inverses are equal. Otherwise, on horizontal segments of $F_X$, the inverses $F_{X}^{-1}\left( p\right)$ and $F_{X}^{-1+}\left(
p\right)$ are different. In such cases, the generalized $\alpha$-inverse $F_{X}^{-1\left(  \alpha\right) }$, for any $\alpha\in\left[  0,1\right]  $, is defined as follows:
\begin{equation} \label{Eq_2_1}
F_{X}^{-1\left(  \alpha\right)  }\left(  p\right)  =(1-\alpha) F_{X}^{-1}\left(
p\right)  +  \alpha F_{X}^{-1+}\left( p\right),\qquad p\in\left[  0,1\right]. 
\end{equation}
More details on generalized inverses can be found in \cite{Dhaene2002a} and \cite{Embrechts2013a}, among others. 

The Tail-Value-at-Risk (TVaR) at the level $p\in[0,1]$ of $X$ is defined as follows:
\begin{equation}\label{Eq_2_2}
  \text{TVaR}_p[X]=\frac{1}{1-p}\int_{p}^{1}F_X^{-1}(q)\text{d}q,
\end{equation}
and the Lower Tail-Value-at-Risk (LTVaR) at the level $p\in[0,1]$ of $X$ is defined as follows:
\begin{equation}\label{Eq_2_3}
  \text{LTVaR}_p[X]=\frac{1}{p}\int^{p}_{0}F_X^{-1}(q)\text{d}q.
\end{equation}

The upper tail transform of the random variable $X$ at the level $x\in \mathbb{R}$, also called the stop-loss premium with retention $x$, is denoted by $\pi_X(x)$, and is given by the following expected value:
\begin{equation}\label{Eq_2_4}
\pi_X(x)=\mathbb{E}\left[\left(X-x\right)_+\right] = \int_x^{+\infty}\left(1-F_{X}(y)\right)\text{d}y,
\end{equation}
where $(x)_+=\max\{x,0\}$. The lower tail transform of $X$ at the level $x\in\mathbb{R}$ is denoted by $\lambda_X(x)$, and is given by the following expected value:
\begin{equation}\label{Eq_2_5}
\lambda_X(x)=\mathbb{E}\left[\left(x-X\right)_+\right] = \int_{-\infty}^x F_{X}(y)\text{d}y.
\end{equation}
The upper and lower tail transforms $\pi_X(x)$ and $\lambda_X(x)$ are linked through the following relation:
\begin{equation}\label{Eq_2_6}
\pi_X(x) - \lambda_X(x) = \mathbb{E}[X] - x,
\end{equation}
which is referred to as the \textit{put-call parity} in quantitative finance.

For any $p\in(0,1)$ and $\alpha \in [0,1]$, the following relation establishes a link between the TVaR and the upper tail transform:
\begin{equation}\label{Eq_2_7}
\text{TVaR}_p[X] = F^{-1(\alpha)}_{X}(p) + \frac{1}{1-p}\pi_X\left(F^{-1(\alpha)}_X(p)\right),
\end{equation}
which can be rearranged to obtained the link between the LTVaR and the lower tail transform:
\begin{equation}\label{Eq_2_7x}
	\text{LTVaR}_p[X] = F^{-1(\alpha)}_{X}(p) - \frac{1}{p}\lambda_X\left(F^{-1(\alpha)}_X(p)\right).
\end{equation}
The proof of \eqref{Eq_2_7} can be found in e.g. \cite{Dhaene_etal_2006}.

Consider the bivariate vector $\left(X_1,X_2\right)$ and the sum $S$ given by $S=X_1+X_2$. The goal is to derive decomposition formulas for the VaR, TVaR, and stop-loss premium of the sum $S$, where the components of the vector $\left(X_1,X_2\right)$ are counter-monotonic, i.e.\ extreme negative dependence. In order to better understand the challenges when working with counter-monotonic random variables, it is helpful to shortly revisit the analogous results for comonotonicity. This is the focus of the remainder of this section.
%
%

\subsection{Comonotonic sums}
\begin{definition}[Comonotonic modification]
The \emph{comonotonic modification} of a bivariate random vector $\underline{X}$ is denoted by $\underline {X}^{c}=\left(
X_{1}^{c},X_{2}^{c}  \right)$, and is given by:
\begin{equation}\label{Eq_2_8}
\underline{X}^{c}\overset{d}{=}\left(  F_{X_{1}}^{-1}\left(  U\right)
,F_{X_{2}}^{-1}\left(  U\right)  \right)  ,
\end{equation}
where $\overset{d}{=}$ stands for equality in
distribution, and $U$ is uniformly distributed over the unit interval.
\end{definition}
Characterization (\ref{Eq_2_8}) shows that the components of the comonotonic random vector $\underline{X}^c$ are jointly driven by a single random source transformed by the non-decreasing functions $F_{X_{1}}^{-1}$ and $F_{X_{2}}^{-1}$. The marginals of the comonotonic modification remain, however, equal in distribution to those of the original vector $\underline{X}$. Comonotonicity was extensively discussed in \cite{Dhaene2002a}, and its applications in finance and actuarial science are numerous; see e.g.\\ \cite{Dhaene2002b}, \cite{Dhaene_et_al_2005}, \cite{DenuitDhaene2007}, \cite{Deelstra2010:overview_Comonotonicity}, \cite{FengJingDhaene} and \cite{HanbaliLinders2019}. 

Let $S^u$ be the sum of the components of $\underline{X}^c$. Then $S^u \overset{d}{=} F^{-1}_{X_1}(U)+F^{-1}_{X_2}(U)$. 
For any dependence structure of the random vector $\left(X_1,X_2\right)$, both the TVaR and the stop-loss premium of the sum $S=X_1+X_2$ are bounded from above by their comonotonic counterparts:
\begin{equation}\label{Eq_2_9}
\begin{array}{rll}
\text{TVaR}_p[S]&\leq \text{TVaR}_p[S^u],&\qquad \text{for all } p\in [0,1],\\
\pi_S(x)&\leq \pi_{S^u}(x),&\qquad \text{for all } x \in \mathbb{R}.
\end{array}\end{equation}
The proofs of \eqref{Eq_2_9} can be found in \cite{Dhaene2000} and \cite{Kaas_etal:Upper_Lower_Bounds}.

Quantiles are non-decreasing functions, and hence, the function $f:u\mapsto F^{-1}_{X_1}(u)+F^{-1}_{X_2}(u)$ is also non-decreasing. It is well-known that the quantile of a random variable transformed by a monotone function can be expressed as the quantile of the original random variable transformed using that function \citep{Dhaene2002a}. Thus, combining $S^u\overset{d}{=}f(U)$ and the non-decreasingess of $f$, the following decomposition of the Value-at-Risk holds:
$$\text{VaR}_p\left[S^u\right]=f(p)=\text{VaR}_p\left[X_1\right]+\text{VaR}_p\left[X_2\right],\qquad \text{for all } p\in[0,1].$$
Note that this decomposition is also valid when $S^u$ is a sum of more than two random variables; see e.g.\ \cite{Dhaene2002a}. Taking into account \eqref{Eq_2_2}, it holds that:
\begin{equation}\label{Eq_2_10}\text{TVaR}_p\left[S^u\right]=\text{TVaR}_p\left[X_1\right] + \text{TVaR}_p\left[X_2\right],\qquad \text{for all } p\in[0,1].\end{equation}
This decomposition is also not limited to the two-dimensional case; see \cite{Dhaene_etal_2006}.

For any $x\in\left(F^{-1}_{S^u}(0),F^{-1+}_{S^u}(1)\right)$, the comonotonic stop-loss premium $\pi_{S^u}(x)$ admits the
decomposition $\pi_{S^u}(x) = \pi_{X_1}(x_1) + \pi_{X_2}(x_2)$, where
\begin{equation}\label{Eq_2_11}x_i=F_{X_i}^{-1(\alpha_x)}(F_{S^{u}}(x)),\end{equation}
for $i=1,2$, and $\alpha_x$ follows from $x_1+x_2=x$.

The fact that the stop-loss premium of a comonotonic sum can be expressed as a sum of the marginal stop-loss premiums is a long-established result (see \cite{MeileisonNadas1979}), and is valid regardless of the number of components of the comonotonic sum $S^{u}$. The explicit expression \eqref{Eq_2_11} for the retentions $x_i$ was derived in \cite{Dhaene2000};
see also \cite{Hobson2005}, \cite{Chen2008}, \cite{Chen2015} and \cite{Linders2012}. 

\section{Counter-monotonic sums}\label{Sec:Countermonotonicity}
	\subsection{Definition}
\begin{definition}[Counter-monotonic modification]

	The \emph{counter-monotonic modification} of a bivariate random vector $\underline{X}$ is denoted by $\underline{X}^{cm}=\left(  X_{1}^{cm},X_{2}^{cm}  \right)$, and is given by:
	$$
	\underline{X}^{cm}\overset{d}{=}\left(  F_{X_{1}}^{-1}\left(  U\right)
	,F_{X_{2}}^{-1}\left(1-  U\right)  \right)  ,
	$$
	where $\overset{d}{=}$ stands for\textit{ equality in
		distribution}, and $U$ is uniformly distributed over the unit interval.
\end{definition}

The counter-monotonic modification $\underline{X}^{cm}$ of a random vector $\underline{X}$ has the same marginal distributions as $\underline{X}$, but the components of $\underline{X}^{cm}$ move perfectly in the opposite direction. Hence, counter-monotonicity corresponds to extreme negative dependence structure. 

Counter-monotonicity leads to a convex lower bound for a sum of dependent risks; see e.g.\\ \cite{Ruschendorf1983}, \cite{Dhaene_Denuit_1999} and \cite{Cheung_Lo_2014}. Let $S^l$ be the sum of the components of the transformed vector $\underline{X}^{cm}$, such that $S^l = F^{-1}_{X_1}(U)+F^{-1}_{X_2}(1-U)$.

Analogously to the inequalities in \eqref{Eq_2_9}, for any dependence structure of the random vector $\left(X_1,X_2\right)$, both the TVaR and the stop-loss premium of the sum $S=X_1+X_2$ are bounded from below by their counter-monotonic counterparts:
$$
\begin{array}{rll}
\text{TVaR}_p[S^l]&\leq \text{TVaR}_p[S],&\qquad \text{for all } p\in [0,1],\\
\pi_{S^l}(x)&\leq \pi_{S}(x),&\qquad \text{for all } x \in \mathbb{R}.
\end{array}
$$
But the analogy with comonotonic sums breaks down at this point. In particular, the decomposition of the VaR, TVaR and stop-loss premium of the counter-monotonic sum of arbitrary random variables is still an open problem, to which the present paper is devoted. 

In order to understand the challenge, recall the function $f$:
\begin{equation}\label{Eq_3_1}
f(u) = F^{-1}_{X_1}(u) + F^{-1}_{X_2}(u),\qquad \text{for } u\in(0,1),
\end{equation}
and consider the following function $g$:
\begin{equation}\label{Eq_3_2}
g(u)=F_{X_1}^{-1}(u)+F_{X_2}^{-1}(1-u) \text{, \qquad for}\ u\in(0,1),
\end{equation}
with $S^u\overset{d}{=}f(U)$ and $S^l\overset{d}{=}g(U)$. A crucial property underpinning the decompositions of risk measures of comonotonic sums is that the function $f$ is always non-decreasing. In contrast, the function $g$ is not necessarily monotone, which complicates the derivations.

For some classes of marginal distributions, the problem of determining decomposition formulas is relatively simple, and the function $g$ is often monotone under some easily identifiable conditions. For instance, this is the case when $\text{log}(X_i)-\mathbb{E}\left[\text{log}(X_i)\right]$ for $i=1,2$ are symmetric and identically distributed. Another example is when $X_1$ and $X_2$ are such that $X_i \overset{d}{=}a_i + b_i W_i$, for $i=1,2$, where the $W_i$'s are symmetric and identically distributed random variables, which implies that the function $g$ is always monotone, and the sign of its derivative depends upon the sign of $b_1-b_2$. \cite{Chaoubi} derive decompositions of the VaR and TVaR of counter-monotonic sums in the case of non-decreasing functions $g$ \textbf{(Proposition 1)}, as well as in the more challenging case of concave/convex functions $g$ \textbf{(Propositions 2 and 3)}. \textbf{They also show that the function $g$ can be either non-decreasing or concave/convex for several combinations of marginal distributions $F_{X_1}$ and $F_{X_2}$.}

\subsection{The set $E_x$}
In order to derive decomposition formulas for risk measures of counter-monotonic sums that hold for general marginal distributions, a closer analysis has to be performed on the behavior of the function $g$. A key component in the derivations is the set $E_x$ introduced hereafter. Note that throughout the paper, the set $E_x$ is associated with the function $g$, even though it is not explicitly included in the notation.
\begin{definition}\label{DefSetEk}
	For any $x\in\left(x^{\min},x^{\max} \right)$, with $x^{\max}=\sup_{u\in[0,1]}g(u)$ and $x^{\min}=\inf_{u\in[0,1]}g(u)$, and $j=1,...,N_x$, the element $u_{x,j}\in(0,1)$ belongs to $E_x$ if and only if  there exist $a_j, b_j, c_j \in [0,1]$ with $a_j<c_j$ such that one of the following conditions is satisfied:
	\begin{enumerate}
		\item $g(u)<x$ for $u\in (a_j,b_j)$, and $g(u)= x$ for $u\in(b_j,u_{x,j})$, and $g(u)>x$ for $u\in (u_{x,j},c_j)$.
		\item $g(u)>x$ for $u \in (a_j,b_j)$, and $g(u)= x$ for $u\in(b_j,u_{x,j})$, and $g(u)<x$ for $u\in (u_{x,j},c_j)$.

	\end{enumerate}
\end{definition}

Definition \ref{DefSetEk} states that for $u_{x,j}$ to be an element of $E_x$, the function $g$ must cross the level $x$, where the crossing occurs from below under Condition 1 and from above under Condition 2. If $g$ reaches $x$ without crossing it, no elements would be counted in $E_x$. In case $g$ equals $x$ before crossing it, then the element that belongs to $E_x$ is the last one before the crossing occurs. \textbf{Note that $b_j=u_{x,j}$ if the function is strictly increasing in the neighborhood of the crossing. In such cases, there are no points $u$ in $(b_j,u_{x,j})$, and in particular, the equality $g(u)=x$ is not required.}

Figure \ref{Fig:SetEx} provides a general illustration of how the set $E_x$ is defined. For a given $x\in(x^{\min},x^{\max})$, the set $E_x$ contains the elements $u_{x,1}, u_{x,2},u_{x,3},u_{x,4}$ and $ u_{x,5}$. For $u_{x,1}$, $u_{x,2}$ and $u_{x,3}$, the function is locally continuous and strictly monotone. In particular, Condition 1 of Definition \ref{DefSetEk} is satisfied for $u_{x,1}$ and $u_{x,3}$, where the level $x$ is crossed from below, and Condition 2 is satisfied for $u_{x,2}$, where $x$ is crossed from above. The function $g$ is left-continuous and has a jump \textbf{at} $u_{x,4}$. Inequality $g(u_{x,4}+)<x<g(u_{x,4})$ holds, and Condition 2 is satisfied, where $g(u_{x,4}+)$ is the right-limit of $g$ in $u_{x,4}$. Note that $g(u_{x,4})\neq x$. The points $q_1$ and $q_2$ in Figure \ref{Fig:SetEx} do not belong to the set $E_x$. Although $g(q_1)=g(q_2)=x$, the function $g$ does not cross $x$ immediately after these points. The function $g$ is flat on the interval $[q_2,u_{x,5}]$, with $g(u)=x$ for all $u\in [q_2,u_{x,5}]$. However, only the point $u_{x,5}$ from that interval belongs to the set $E_x$, since for this point Condition 1 of Definition \ref{DefSetEk} is satisfied.

\textbf{Note that Definition \ref{DefSetEk} requires the existence of three points $a_j,b_j,c_j\in[0,1]$ with $a_j<b_j<c_j$. For all four points $u_{x,1}$, $u_{x,2}$, $u_{x,3}$ and $u_{x,4}$, the equality $u_{x,j}=b_j$ holds because the function $g$ is strictly monotone at the level of the crossing. In contrast, the identification of the point $b_j$ which is not necessarily equal to $u_{x,j}$ is required when the function $g$ is constant at the level of the crossing, as is the case of the point $u_{x,5}$.}

\begin{figure}[!h]
	\center
	\begin{tikzpicture}
		\draw[->] (0,0) -- (10,0) node[anchor=north]{} ;
		\draw[->] (0,0) -- (0,4) node[anchor=east] {$g$};
		\draw	(0,0) node[anchor=north east] {0}
		(10,0) node[anchor=north] {1}
		(3.5,0) node[anchor=north] {$u$}
		(0,2.5) node[anchor=east]{$x$}
		(0.5235988/pi/0.5,0) node[anchor=north] {$u_{x,1}$}
		(1.04*pi*0.5,0) node[anchor=north] {$u_{x,2}$}
		(4+0.5235988/pi/0.5,0) node[anchor=north] {$u_{x,3}$}
		(5,0) node[anchor=north] {$u_{x,4}$}
		(5.5,0) node[anchor=north] {$q_1$}
		(6.5,0) node[anchor=north] {$q_2$}
		(7.5,0) node[anchor=north] {$u_{x,5}$};
		\draw (0,2) sin (1,3) cos (2,2) sin (3,1) cos (4,2) sin (5,3);
		\draw (5,2) sin (5.5,2.5) cos (6,2) sin (6.5, 1.5);
		\draw (5,2) circle (2pt);
		\filldraw (5,3) circle (2pt);
		\draw[dashed] (5.5,3) -- (5.5,0); 
		\draw[dashed] (6.5,3) -- (6.5,0); 
		\draw[dashed] (7.5,3) -- (7.5,0); 
		\filldraw (6.5,1.5) circle (2pt);
		\draw (6.5,2.5) -- (7.5,2.5);
		\draw (6.5,2.5) circle (2pt);
		\filldraw (7.5,2.5) circle (2pt);
		\draw (7.5,2.5) sin (8.75,3.5) cos (10,4);
		\draw[thick, dashed] (0,2.5) -- (10,2.5);
		\draw[dashed] (0.5235988/pi/0.5,0) -- (0.5235988/pi/0.5,3);
		\draw[dashed] (1.04*pi*0.5,0) -- (1.04*pi*0.5,3);
		\draw[dashed] (4+0.5235988/pi/0.5,0) -- (4+0.5235988/pi/0.5,3);
		\draw[dashed] (5,0) -- (5,3);
	\end{tikzpicture}
	\caption{\footnotesize Illustration of the function $g$ and the set $E_x$, where $E_x=\left\{u_{x,1}, u_{x,2},u_{x,3},u_{x,4},u_{x,5} \right\}$, and $q_1$ and $q_2$ do not belong to $E_x$.  }
	\label{Fig:SetEx}
\end{figure}
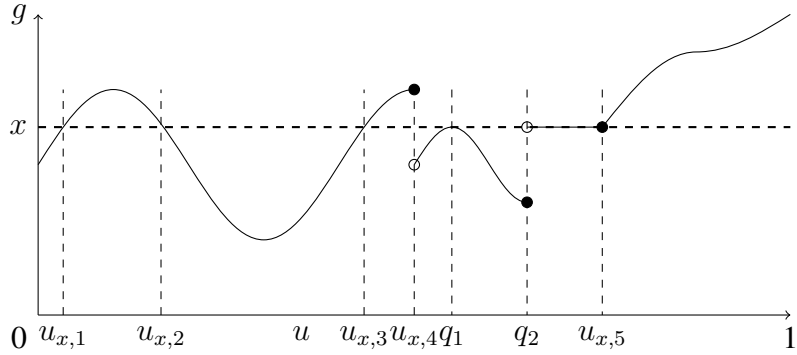

The set $E_x$ is not empty for $x\in\left(x^{\min},x^{\max}\right)$, and is at most countably infinite (i.e.\ $N_x$ can be infinity), as long as $S^l$ is not degenerate. The trivial case where $S^l$ is degenerate is not discussed here. If the function $g$ is monotone, the set $E_x$ contains a single element for any $x\in\left(x^{\min},x^{\max}\right)$. In this case, Condition 1 corresponds to non-decreasingness whereas Condition 2 corresponds to non-increasingness. If $g$ is either convex or concave, the set $E_x$ contains at most two elements, where these two elements are equidistant from the minimum of $g$ when $g$ is symmetric; \textbf{closed-form expressions of the elements of $E_x$ in these cases can be found in Propositions 2 and 3 of \cite{Chaoubi} for symmetric and either convex or concave $g$, respectively, as well as Propositions 6 and 7 where the assumption of symmetry is relaxed}.

A typical situation where the set $E_x$ has multiple elements, and which motivates the present study, is when one of the two random variables, say $X_1$, is continuous (e.g.\ investment fund value) and the other, say $X_2$, is discrete (e.g.\ number of claims). The inverse $F^{-1}_{X_1}$ is strictly increasing whereas the inverse $F^{-1}_{X_2}$ is non-decreasing. As a consequence, the function $g$ increases on the flat segments of $F^{-1}_{X_2}$, and has downward jumps due to the jumps of $F^{-1}_{X_2}$. The situation where $E_x$ has multiple elements can also arise when $X_1$ and $X_2$ are continuous but exhibit skewness and excess kurtosis.


Figure \ref{Figure1} illustrates the function $g$ with two examples. The top panel of Figure \ref{Figure1} displays the function $g$ when $X_1$ and $X_2$ follow a Gamma distribution with shape parameters $4$ and $3$ respectively, and scale parameter $1$ in both cases. The combination of these two distributions results in a concave and asymmetric function $g$. The horizontal line $x$ corresponds to the quantile of the counter-monotonic sum $S^l$ at the level $0.95$, which is equal to $8.94$. Due to the concave shape of $g$, the set $E_{8.94}$ contains two elements. The asymmetry of $g$ implies that the elements of $E_{8.94}$ are not equidistant from the point where the minimum of $g$ is attained.

The bottom panel of Figure \ref{Figure1} displays the case where $X_1$ is assumed to follow a Gamma distribution with shape parameter $5$ and scale parameter $1$, and $X_2$ is assumed to follow a Poisson distribution with rate parameter $5$. The shape of the function $g$ is rather complex. Moreover, for $x=9.85$, which is the median of the counter-monotonic sum, the set $E_{9.85}$ has $12$ elements.
	\begin{figure}[!h]
		\begin{center}
			\includegraphics[scale=0.6]{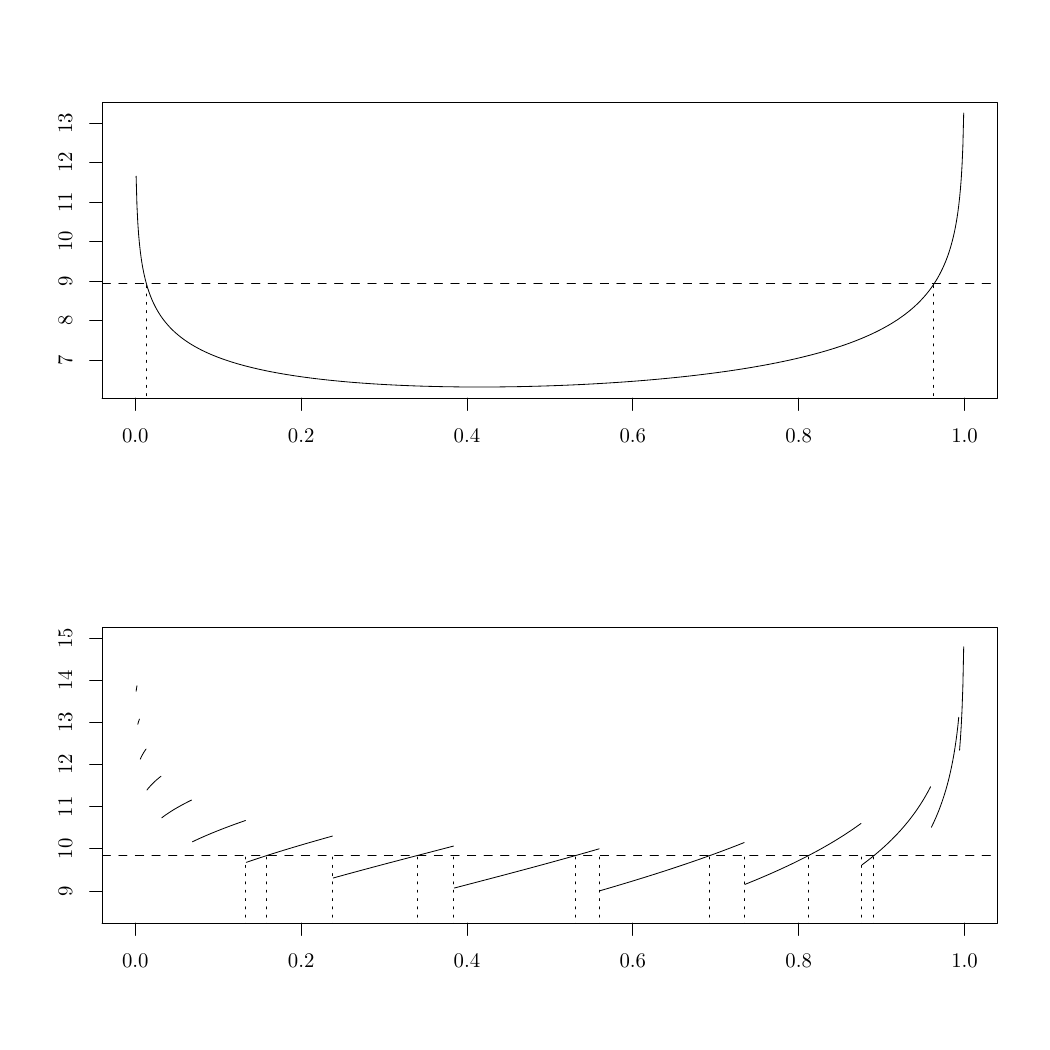}
			\caption{\footnotesize Two examples of the function $g$. In the top panel, both $X_1$ and $X_2$ follow a Gamma distribution with shape parameters $4$ and $3$, respectively, and scale parameter equal to $1$ for both. In the bottom panel, $X_1$ follows a Gamma distribution with shape parameter $5$ and scale parameter $1$, and $X_2$ follows a Poisson distribution with rate parameter $5$.}
			\label{Figure1}
		\end{center}
		
	\end{figure}

The remainder of the paper contains the main results. Namely, the decomposition of the VaR, the TVaR, and the upper tail transform of the counter-monotonic sum are derived. The two examples of the function $g$ from Figure \ref{Figure1} are used to illustrate the results. In the sequel, the notation $u_{x,j}$ and $N_x$ is used to denote the elements and the number of elements of the set $E_x$ when this set is defined for any $x$. When $x$ stands for the inverse $F^{-1}_{S^l}(p)$, the notation $u_{p,j}$ is used instead of $u_{F^{-1}_{S^l}(p),j}$ for the elements of $E_{F^{-1}_{S^l}(p)}$, and the notation $N_p$ is used instead of $N_{F^{-1}_{S^l}(p),j}$. Similarly, $u_{p,j}^\alpha$ and $N_p^{\alpha}$ are used instead of $u_{F^{-1(\alpha)}_{S^l}(p),j}$ and $N_{F^{-1(\alpha)}_{S^l}(p),j}$ when $x$ stands for the generalized inverse $F^{-1(\alpha)}_{S^l}(p)$.

\section{Value-at-Risk of the counter-monotonic sum}\label{Sec:VaR}
\subsection{Main results}
The decomposition of the VaR is closely related to the definition of the set $E_x$. For $p\in (0,1)$, let $u_{p,1}<u_{p,2}<...<u_{p,N_p}$ be the $N_p$ elements of $E_{F^{-1}_{S^l}(p)}$. Recall that by definition, for any $j\in \{1,...,N_p\}$, the function $g$ crosses the level $F^{-1}_{S^l}(p)$ at $u_{p,j}$. If $g$ has a jump \textbf{at} $u_{p,j}$, then $g(u_{p,j})\neq F^{-1}_{S^l}(p)$. If $g$ is continuous \textbf{at} $u_{p,j}$, then $g(u_{p,j})= F^{-1}_{S^l}(p)$. Therefore:
\begin{equation}\label{Eq-6-1}\text{VaR}_p[S^l]=\text{VaR}_{u_{p,j}}[X_1]+\text{VaR}_{1-u_{p,j}}[X_2],\end{equation}
for any $p\in(0,1)$, where $u_{p,j}$ is one of the $N_p$ elements of $E_{F^{-1}_{S^l}(p)}$ in which $g$ does not jump. 

If $g$ is continuous \textbf{at} more than one element of the set $E_{F^{-1}_{S^l}(p)}$, then VaR$_p[S^l]$ admits multiple decompositions, i.e.\ \eqref{Eq-6-1} holds for all elements $u_{p,j}$ in which $g$ does not jump. In fact, the decomposition holds for all $u\in(0,1)$ such that $g(u) = F^{-1}_{S^l}(p)$, and is not restricted to the elements of the set $E_{F^{-1}_{S^l}(p)}$. The decomposition of VaR when $g$ is not monotone is therefore not necessarily unique. 

If $g$ is not continuous \textbf{at} a given element $u_{p,j}$ of $E_{F^{-1}_{S^l}(u_{p,j})}$, then the decomposition \eqref{Eq-6-1} does not hold for that element, i.e.\ $g(u_{p,j})\neq F^{-1}_{S^l}(p)$. Nevertheless, it is still possible to find an $\alpha$-quantile of $X_1$ and a corresponding $(1-\alpha)$-quantile of $X_2$ such that VaR$_p\left[S^l\right]$ is equal to their sum. This result is stated in the following lemma. Note that the decomposition of the VaR of $S^l$ is obtained by setting $x=F^{-1}_{S^l}(p)$.
\begin{lemma}\label{Lemma-4-1}
	For any $x \in\left(x^{\min},x^{\max} \right)$ and any element $u_{x,j}$ in $E_x$ there exists an $\alpha_{x,j}$
	satisfying:
	\begin{equation}\label{Eq_4_1}
	x=F_{X_1}^{-1(\alpha_{x,j})}(u_{x,j})+F_{X_2}^{-1(1-\alpha_{x,j})}(1-u_{x,j}).
	\end{equation}
	Moreover:
	\begin{equation}\label{Eq_4_2}
	\alpha_{x,j}=\left\{
	\begin{array}{ll}
	\frac{x-g(u_{x,j}-)}{g(u_{x,j}+)-g(u_{x,j}-)}, & \hbox{\qquad if } g(u_{x,j}+)\neq g(u_{x,j}-),\\
	0, & \hbox{\qquad if } g(u_{x,j}+)=g(u_{x,j}-).
	\end{array}
	\right.
	\end{equation}
	where $g(u_{x,j}+)$ and $g(u_{x,j}-)$ are the right and left limits, respectively, of the function $g$ in $u_{x,j}$. 
\end{lemma}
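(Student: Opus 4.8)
The plan is to reduce the right-hand side of \eqref{Eq_4_1} to a convex combination of the one-sided limits of $g$ at the crossing point, and then read off $\alpha_{x,j}$ by solving a scalar linear equation. Write $u_0=u_{x,j}$ and $\alpha=\alpha_{x,j}$ for brevity. First I would record the one-sided limits of $g$ in terms of the left and right inverses. Since $F^{-1}_{X_1}$ is non-decreasing and left-continuous, with right limit equal to the right inverse (see \cite{Dhaene2002a,Embrechts2013a}), and since the map $u\mapsto 1-u$ reverses the direction of approach for the $X_2$-term, I obtain
\[
g(u_0-)=F^{-1}_{X_1}(u_0)+F^{-1+}_{X_2}(1-u_0),\qquad g(u_0+)=F^{-1+}_{X_1}(u_0)+F^{-1}_{X_2}(1-u_0).
\]
These identities depend only on the behaviour of $g$ on punctured neighbourhoods of $u_0$, so they are insensitive to the value $g(u_0)$ itself.

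Next I would expand the right-hand side of \eqref{Eq_4_1} using the definition \eqref{Eq_2_1} of the generalized $\alpha$-inverse, noting that the level $1-\alpha$ appears for $X_2$:
\begin{equation*}
\begin{aligned}
F_{X_1}^{-1(\alpha)}(u_0)+F_{X_2}^{-1(1-\alpha)}(1-u_0)
&=(1-\alpha)\big[F^{-1}_{X_1}(u_0)+F^{-1+}_{X_2}(1-u_0)\big]\\
&\quad+\alpha\big[F^{-1+}_{X_1}(u_0)+F^{-1}_{X_2}(1-u_0)\big].
\end{aligned}
\end{equation*}
Comparing with the displayed limits, the right-hand side of \eqref{Eq_4_1} equals the convex combination $(1-\alpha)g(u_0-)+\alpha g(u_0+)$. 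Setting this equal to $x$ and solving the resulting scalar equation yields $\alpha=\big(x-g(u_0-)\big)/\big(g(u_0+)-g(u_0-)\big)$ whenever $g(u_0+)\neq g(u_0-)$, which is exactly \eqref{Eq_4_2}; when $g(u_0+)=g(u_0-)$ the convex combination is independent of $\alpha$, so the canonical choice $\alpha=0$ works.

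It then remains to check that the $\alpha$ produced by \eqref{Eq_4_2} indeed lies in $[0,1]$, so that the generalized inverses are well defined; this is where Definition \ref{DefSetEk} enters. For $u_0\in E_x$ satisfying Condition 1 (crossing from below), the below/flat behaviour on the left and the strictly-above behaviour on the right give $g(u_0-)\le x\le g(u_0+)$, so the quotient in \eqref{Eq_4_2} lies in $[0,1]$; Condition 2 (crossing from above) gives the reversed inequalities $g(u_0+)\le x\le g(u_0-)$, so numerator and denominator share a sign and again $\alpha\in[0,1]$. In the degenerate subcase $g(u_0+)=g(u_0-)$, these same inequalities pinch $x$ to the common limiting value, which is consistent with the choice $\alpha=0$ returning $g(u_0-)=x$.

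The computations are routine; the step requiring the most care is the correct identification of the one-sided limits $g(u_0\pm)$. The subtlety is that the two summands of $g$ contribute opposite one-sided inverses at a jump: the $X_1$-term contributes its left inverse from the left and its right inverse from the right, whereas the $X_2$-term, because of the reversal $u\mapsto 1-u$, contributes its right inverse from the left and its left inverse from the right. Getting this bookkeeping right is exactly what makes the generalized-inverse sum collapse to the clean interpolation $(1-\alpha)g(u_0-)+\alpha g(u_0+)$, and hence what makes \eqref{Eq_4_2} emerge; matching the sign of the increment $g(u_0+)-g(u_0-)$ to the crossing direction encoded in $E_x$ is the second place where attention is needed.
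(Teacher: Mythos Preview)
Your proof is correct and follows essentially the same approach as the paper: both identify the one-sided limits $g(u_{x,j}\pm)$ in terms of the left and right inverses of $F_{X_1}$ and $F_{X_2}$ (using left-continuity of quantiles and the reversal $u\mapsto 1-u$), recognize that the right-hand side of \eqref{Eq_4_1} is the convex combination $(1-\alpha)g(u_{x,j}-)+\alpha\,g(u_{x,j}+)$, and solve for $\alpha$. Your explicit verification that $\alpha\in[0,1]$ via the crossing conditions in Definition~\ref{DefSetEk} is a worthwhile addition that the paper leaves implicit.
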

\begin{proof}
	For $u_{x,j}\in E_x$, when the function $g$ is continuous \textbf{at} $u_{x,j}$, the equality $g(u_{x,j}-)=g(u_{x,j}+)=x$ holds, and hence:
	$$F_{X_1}^{-1(\alpha_{x,j})}(u_{x,j})+F_{X_2}^{-1(1-\alpha_{x,j})}(1-u_{x,j})=F_{X_1}^{-1}(u_{x,j})+F_{X_2}^{-1}(1-u_{x,j})=x,$$ for any $\alpha_{x,j} \in [0,1]$. Thus, Equation \eqref{Eq_4_1} holds in particular for $\alpha_{x,j}=0$.
	
	When the function $g$ jumps \textbf{at} $u_{x,j}$, the jump size is $|g(u_{x,j}+)-g(u_{x,j}-)|$. Then, there exists $\alpha_{x,j}\in[0,1]$ such that:
	\begin{equation}\label{A1}
		x = \left(1-\alpha_{x,j}\right)g(u_{x,j}-) +  \alpha_{x,j}g(u_{x,j}+).
	\end{equation}
	Note that $F^{-1}_{X_2}\left(1-(u_{x,j}-)\right)=F^{-1}_{X_2}\left(\tilde{u}_{x,j}+\right)$, and $F^{-1}_{X_2}\left(1-(u_{x,j}+)\right)=F^{-1}_{X_2}\left(\tilde{u}_{x,j}-\right)$, where $\tilde{u}_{x,j}=1-u_{x,j}$. Moreover, the inverses $F^{-1}_{X_i}$ are left-continuous, which means that $F^{-1}_{X_i}(q-)=F^{-1}_{X_i}(q)$ and $F^{-1}_{X_i}(q+)=F^{-1+}_{X_i}(q)$ for $q\in(0,1)$. Using \eqref{Eq_2_1}, \eqref{A1} is equivalent to:
	$$x = F^{-1(\alpha_{x,j})}_{X_1}(u_{x,j}) + F^{-1(1-\alpha_{x,j})}_{X_2}(1-u_{x,j}).$$
	Using again \eqref{A1} and rearranging leads to Expression \eqref{Eq_4_2} of $\alpha_{x,j}$ when $g$ has a jump \textbf{at} $u_{x,j}$.
\end{proof}

If $g$ is continuous \textbf{at} $u_{x,j}$, i.e.\ $g(u_{x,j}-)= g(u_{x,j}+)=g(u_{x,j})$, Equality \eqref{Eq_4_1} holds for any choice
of $\alpha_{x,j}$, and more importantly, the equality $g(u_{x,j})=x$ holds. If $g$ has a jump \textbf{at} $u_{x,j}$, i.e.\ $g(u_{x,j}-)\neq g(u_{x,j}+)$, the choice of $\alpha_{x,j}$ is unique. The value of $\alpha_{x,j}$ locates $x$ within the discontinuity, and quantifies the distance between $x$ and the function $g$.

\subsection{Illustration}
Consider the first example where $X_1$ and $X_2$ both follow a Gamma distribution, i.e.\ the top panel on Figure \ref{Figure1}. For $x=F^{-1}_{S^l}\left(0.95\right)$, the set $E_{F^{-1}_{S^l}(0.95)}$ contains the following two elements:
\begin{equation}\label{Eq_6_3}
u_{p,1} = 0.01328 \qquad \text{and} \qquad u_{p,2}= 0.96358.
\end{equation}
Since the function $g$ is continuous, the decomposition in \eqref{Eq-6-1} holds for both $u_{p,1}$ and $u_{p,2}$.

Consider now the second example where $X_1$ follows a Gamma distribution and $X_2$ follows a Poisson distribution, i.e.\ the bottom panel on Figure \ref{Figure1}. For $x=F^{-1}_{S^l}(0.5)$, the set $E_{F^{-1}_{S^l}(0.5)}$ contains the following twelve elements:
\begin{equation}\label{Eq_6_4}
\begin{array}{rrr}
u_{p,1} = 0.13337, &\quad u_{p,2} = 0.15843, &\quad u_{p,3} =  0.23781,\\
u_{p,4} = 0.33971, &\quad u_{p,5} = 0.38404, &\quad u_{p,6} =  0.53079,\\
u_{p,7} = 0.55951, &\quad u_{p,8} = 0.69279, &\quad u_{p,9} =  0.73497,\\ u_{p,10} = 0.81179,&\quad u_{p,11} =  0.87535,&\quad u_{p,12} =  0.89076.
\end{array}
\end{equation}
Figure \ref{Figure1} also shows that the function $g$ jumps \textbf{at} $u_{p,1}$, $u_{p,3}$, $u_{p,5}$, $u_{p,7}$, $u_{p,9}$ and $u_{p,11}$. For these elements, the decomposition \eqref{Eq-6-1} does not hold. Instead, the result of Lemma \ref{Lemma-4-1} has to be used. In particular, from \eqref{Eq_4_2}, it follows that the $\alpha_{p,j}$'s required for \eqref{Eq_4_1} are given by $\alpha_{p,1}=0.83598$, $\alpha_{p,3}=0.46330$, $\alpha_{p,5}=0.22700$, $\alpha_{p,7}=0.16108$, $\alpha_{p,9}=0.31461$ and $\alpha_{p,11}=0.76530$. For the remaining elements, the decomposition \eqref{Eq-6-1} holds.

\section{Tail Value-at-Risk of the counter-monotonic sum}\label{Sec:TVaR}
\subsection{Main results}
The following theorem provides a decomposition for the TVaR of $S^l$ in terms of marginal TVaR's and LTVaR's.
\begin{theorem}\label{Theorem-5-1}
	For any $p\in(0,1)$ and $\alpha\in [0,1]$, let $u_{p,1}^{\alpha}<...<u_{p,N_p^{\alpha}}^{\alpha}$ be the $N_p^{\alpha}$ ordered elements of the set $E_{F^{-1(\alpha)}_{S^l}(p)}$. The \emph{Tail Value-at-Risk} at the level $p$ of $S^{l}$ can be expressed as follows:
	\begin{equation}\label{Eq_5_1}\emph{TVaR}_p\left[S^l\right] = \left\{\begin{array}{ll}\frac{1}{1-p}t_1^\alpha(p),& \qquad \text{if\ \ \ } g(u)\leq F^{-1(\alpha)}_{S^l}(p) \text{\ \ \ for\ \ \ } u\in \left(0,u_{p,1}^{\alpha}\right),\\
	\frac{1}{1-p}t_2^\alpha(p),& \qquad \text{if\ \ \ } g(u)\geq F^{-1(\alpha)}_{S^l}(p) \text{\ \ \ for\ \ \ } u\in\left(0,u_{p,1}^{\alpha}\right),
	\end{array}\right.\end{equation}
	where
\small	\begin{equation}\label{Eq_5_2}\begin{array}{ll}
	t_1^\alpha(p)& = (1-u_{p,1}^{\alpha})\left(\emph{TVaR}_{u_{p,1}^{\alpha}}\left[X_1\right]+\emph{LTVaR}_{1-u_{p,1}^{\alpha}}\left[X_2\right]\right) - \mathcal{T}_{p,N_p^\alpha}^\alpha+F^{-1(\alpha)}_{S^l}(p)\left(u_{p,1}^{\alpha} + \mathcal{D}_{p,N_p^\alpha}^\alpha - p\right),\\
	t_2^\alpha(p)& =u_{p,1}^{\alpha}\left(\emph{LTVaR}_{u_{p,1}^{\alpha}}\left[X_1\right]+\emph{TVaR}_{1-u_{p,1}^{\alpha}}\left[X_2\right]\right) + \mathcal{T}_{p,N_p^\alpha}^\alpha+F^{-1(\alpha)}_{S^l}(p) \left(1-u_{p,1}^{\alpha}-\mathcal{D}_{p,N_p^\alpha}^\alpha-p\right),
	\end{array}\end{equation}\normalsize
	and
	\begin{equation}\label{Eq_5_3}\begin{array}{rl}\mathcal{T}_{p,N_p^\alpha}^\alpha = &\underset{j=2}{\overset{N_p^\alpha}{\sum}}(-1)^j (1-u_{p,j}^{\alpha})\left(\emph{TVaR}_{u_{p,j}^{\alpha}}\left[X_1\right]+\emph{LTVaR}_{1-u_{p,j}^{\alpha}}\left[X_2\right]\right),\\
	\mathcal{D}_{p,N_p^\alpha}^\alpha=&\underset{j=2}{\overset{N_p^\alpha}{\sum}}(-1)^j\left(1-u_{p,j}^{\alpha}\right),
	\end{array}
	\end{equation}
	and $\sum_{j=2}^1=0$ by convention.
\end{theorem}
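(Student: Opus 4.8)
The plan is to avoid working directly with $F^{-1}_{S^l}$ and instead route everything through the stop-loss transform. Fixing $x=F^{-1(\alpha)}_{S^l}(p)$, the general identity \eqref{Eq_2_7} gives, for any $\alpha$,
$$(1-p)\,\text{TVaR}_p[S^l] = (1-p)x + \pi_{S^l}(x),$$
so it suffices to compute $\pi_{S^l}(x)$. Since $S^l\overset{d}{=}g(U)$ with $U$ uniform, the stop-loss transform becomes a plain Lebesgue integral of $g$,
$$\pi_{S^l}(x)=\int_0^1\bigl(g(u)-x\bigr)_+\,\mathrm{d}u=\int_{\{u:\,g(u)>x\}}\bigl(g(u)-x\bigr)\,\mathrm{d}u,$$
and the whole problem reduces to integrating $g$ over the super-level set $\{g>x\}$ and re-expressing the answer through marginal TVaR's and LTVaR's.

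The key structural step is to describe $\{g>x\}$ via the set $E_x$. By Definition~\ref{DefSetEk} every sign change of $g-x$ (a crossing from below under Condition~1, from above under Condition~2, including jump crossings) is recorded by exactly one element $u^{\alpha}_{p,j}$, while between two consecutive elements $g-x$ keeps its sign, and flat stretches at level $x$ and single jump points contribute nothing to $(g-x)_+$. Hence, up to a null set, $\{g>x\}$ is the union of the inter-crossing intervals on which $g>x$, and these alternate with the ``below'' intervals. In Case~1 (where $g\le x$ on $(0,u^{\alpha}_{p,1})$) the first crossing is from below, so the ``above'' intervals are $(u^{\alpha}_{p,1},u^{\alpha}_{p,2}),(u^{\alpha}_{p,3},u^{\alpha}_{p,4}),\dots$. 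Introducing the signed tails $I_j=\int_{u^{\alpha}_{p,j}}^{1}(g-x)\,\mathrm{d}u$, a telescoping/grouping argument shows $\pi_{S^l}(x)=\sum_{j=1}^{N^{\alpha}_p}(-1)^{j+1}I_j$, the below-intervals cancelling pairwise and the parity of $N^{\alpha}_p$ being absorbed automatically (the final interval being ``above'' exactly when $N^{\alpha}_p$ is odd). The remaining work is algebraic: using the substitution $s=1-t$ with \eqref{Eq_2_2}--\eqref{Eq_2_3},
$$\int_u^1 g(t)\,\mathrm{d}t=(1-u)\bigl(\text{TVaR}_u[X_1]+\text{LTVaR}_{1-u}[X_2]\bigr),\qquad \int_0^u g(t)\,\mathrm{d}t=u\bigl(\text{LTVaR}_u[X_1]+\text{TVaR}_{1-u}[X_2]\bigr),$$
so writing $G(u)$ for the first and using $I_j=G(u^{\alpha}_{p,j})-x(1-u^{\alpha}_{p,j})$, substituting into $\pi_{S^l}(x)$ and separating the $j=1$ term from the alternating remainder produces $\mathcal{T}_{p,N_p^\alpha}^\alpha$ and $\mathcal{D}_{p,N_p^\alpha}^\alpha$; adding $(1-p)x$ and collecting the coefficient of $x$ (which reduces to $u^{\alpha}_{p,1}+\mathcal{D}_{p,N_p^\alpha}^\alpha-p$) yields exactly $t_1^\alpha(p)$.

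For Case~2 I would run the mirror computation on the sub-level set, i.e.\ on $\lambda_{S^l}(x)=\int_0^1(x-g)_+\,\mathrm{d}u$, and pass to the TVaR through $\mathbb{E}[S^l]=p\,\text{LTVaR}_p[S^l]+(1-p)\,\text{TVaR}_p[S^l]$ together with \eqref{Eq_2_7x} (equivalently, the put--call parity \eqref{Eq_2_6}). The same alternating-tail bookkeeping gives $\lambda_{S^l}(x)=\sum_{j}(-1)^{j+1}\bigl(x(1-u^{\alpha}_{p,j})-G(u^{\alpha}_{p,j})\bigr)$, and the term $\mathbb{E}[S^l]$ combines with the $j=1$ contribution via $\int_0^{u}g=\mathbb{E}[S^l]-\int_u^1 g$ to convert the leading $\text{TVaR}[X_1]+\text{LTVaR}[X_2]$ block into the $\text{LTVaR}[X_1]+\text{TVaR}[X_2]$ block of $t_2^\alpha(p)$, the correction terms keeping the same $\mathcal{T}_{p,N_p^\alpha}^\alpha$ and $\mathcal{D}_{p,N_p^\alpha}^\alpha$.

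I expect the main obstacle to be the structural claim of the second paragraph: justifying rigorously, straight from Definition~\ref{DefSetEk}, that the super-level set decomposes (modulo a null set) into the asserted alternating family of inter-crossing intervals — in particular handling jump crossings, flat pieces at level $x$, and points where $g$ merely touches $x$ without crossing (the $q_1,q_2$ of Figure~\ref{Fig:SetEx}), and, when $N^{\alpha}_p=\infty$, replacing the finite telescoping by a convergence argument for the series (the integrand being nonnegative, countable additivity of $\pi_{S^l}$ over the ``above'' intervals does the job). Once that combinatorial description of $E_x$ is secured, the rest is the substitution of the two antiderivative identities and routine collection of terms.
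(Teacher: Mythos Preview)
Your proposal is correct and follows essentially the same route as the paper: start from \eqref{Eq_2_7} to reduce to $\int_0^1(g(u)-x)_+\,\mathrm{d}u$, partition $(0,1)$ at the crossing points $u^{\alpha}_{p,j}$, telescope via $\int_{u_j}^{u_{j+1}}=\int_{u_j}^{1}-\int_{u_{j+1}}^{1}$, and substitute the two antiderivative identities for $\int_u^1 g$ and $\int_0^u g$ in terms of marginal TVaR/LTVaR. The paper spells out the four parity/sign sub-cases \eqref{A2}--\eqref{A5} before merging them, whereas you skip straight to the alternating tail sum; this is cosmetic. Your Case~2 treatment (compute $\lambda_{S^l}(x)$ and pass back through put--call parity, using $\int_0^{u_1}g=\mathbb{E}[S^l]-G(u_1)$ to produce the $\text{LTVaR}[X_1]+\text{TVaR}[X_2]$ block) is a minor variant of the paper's direct computation of $I_2^\alpha(p)$ via \eqref{P6}, and leads to the same $t_2^\alpha(p)$. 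Incidentally, the paper itself writes ``Suppose first that $N_p^{\alpha}$ is finite'' and never returns to the infinite case, so your remark about needing countable additivity there is a point of extra care rather than a divergence from the paper.
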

\begin{proof}
	Using the notation $x_p^{\alpha}=F^{-1(\alpha)}_{S^l}(p)$, where $x_p^\alpha\in (x^{\min},x^{\max})$ for $p\in(0,1)$, the TVaR of the counter-monotonic sum can be written using \eqref{Eq_2_7} as follows:
	\begin{equation}\label{P1}
		\text{TVaR}_p\left[S^l\right]=\frac{1}{1-p}\left(\int_0^1\left(F^{-1}_{S^l}(u)-x_p^{\alpha}\right)_{+}\text{d}u + x_p^{\alpha}\left(1-p\right)\right).
	\end{equation}
	Since $S^l \overset{d}{=}g(U)$, it holds that:
	\begin{equation}\label{P1_1}
		\int_0^1\left(F^{-1}_{S^l}(u)-x_p^{\alpha}\right)_{+}\text{d}u = \int_0^1\left(g(u)-x_p^{\alpha}\right)_{+}\text{d}u.
	\end{equation}
	Let $u_{p,j}^{\alpha}$ for $j=1,...,N_p^{\alpha}$ be the $N_p^{\alpha}$ ordered elements of the set $E_{x_p^{\alpha}}$. Suppose first that $N_p^{\alpha}$ is finite. The following holds:
	\small
	$$
	\int_0^1\left(g(u)-x_p^{\alpha}\right)_{+}\text{d}u =\int_{0}^{u_{p,1}^{\alpha}}\left( g(u)-x_p^{\alpha}\right)
	_{+}\text{d}u+\sum_{j=1}^{N_p^{\alpha}-1}\int_{u_{p,j}^{\alpha}}^{u_{p,j+1}^{\alpha}}\left( g(u)-x_p^{\alpha}\right)
	_{+}\text{d}u+\int_{u_{p,N_p^{\alpha}}^{\alpha}}^{1}\left( g(u)-x_p^{\alpha}\right) _{+}\text{d}u.  
	$$
	\normalsize
	The sign of the function $g(u)-x_p^{\alpha}$ alternates over two consecutive intervals $\left( u_{p,j-1}^{\alpha},u_{p,j}^{\alpha}\right)$ and $\left( u_{p,j}^{\alpha},u_{p,j+1}^{\alpha}\right)$, for $j=1,...,N_p^{\alpha}$, and the sign of $g(u)-x_p^{\alpha}$ on $(0,u_{p,1}^{\alpha})$ determines the sign of $g(u)-x_p^{\alpha}$ on all subsequent intervals. Therefore, depending on whether $N_p^{\alpha}$ is either even or odd, and whether the sign of $g(u)-x_p^{\alpha}$ is either positive or negative on $\left(0,u_{p,1}^{\alpha}\right)$, four possible cases can be distinguished. 
	
	The first case is when $N_p^{\alpha}$ is an even number (i.e.\ there exists an integer $m$ such that $N_p^{\alpha}=2m$), and $g(u)\geq x_p^{\alpha}$ for $u\in\left(0,u_{p,1}^{\alpha}\right)$. In this case, it follows that:%
	\begin{equation}\label{A2}
		\int_0^1\left(g(u)-x_p^{\alpha}\right)_{+}\text{d}u = \int_{0}^{u_{p,1}^{\alpha}}\left(g(u)-x_p^{\alpha}\right) \text{d}%
		u+\sum_{j=1}^{m-1}\int_{u_{p,2j}^{\alpha}}^{u_{p,2j+1}^{\alpha}}\left(g(u)-x_p^{\alpha}\right)
		du+\int_{u_{p,N_p^{\alpha}}^{\alpha}}^{1}\left(g(u)-x_p^{\alpha}\right) \text{d}u. 
	\end{equation}
	The second case is when $N_p^{\alpha}$ is an even number (i.e.\ $N_p^{\alpha}=2m$), and $g(u)\leq x_p^{\alpha}$ for $u\in(0,u_{p,1}^{\alpha})$, which leads to:
	\begin{equation}\label{A3}
		\int_0^1\left(g(u)-x_p^{\alpha}\right)_{+}\text{d}u =\sum_{j=1}^{m}\int_{u_{p,2j-1}^{\alpha}}^{u_{p,2j}^{\alpha}}\left(g(u)-x_p^{\alpha}\right)\text{d}u.  
	\end{equation}
	The third case is when $N_p^{\alpha}$ is an odd number (i.e.\ there exists an integer $m$ such that $N_p^{\alpha}=2m+1$), and $g(u)\geq x_p^{\alpha}$ for $u\in(0,u_{p,1}^{\alpha})$. In this case, it follows that:
	\begin{equation}\label{A4}
		\int_0^1\left(g(u)-x_p^{\alpha}\right)_{+}\text{d}u =\int_{0}^{u_{p,1}^{\alpha}}\left(g(u)-x_p^{\alpha}\right) \text{d}%
		u+\sum_{j=1}^{m}\int_{u_{p,2j}^{\alpha}}^{u_{p,2j+1}^{\alpha}}\left(g(u)-x_p^{\alpha}\right)\text{d}u.
	\end{equation}
	The fourth case is when $N_p^{\alpha}$ is an odd number (i.e.\ $N_p^{\alpha}=2m+1$), and $g(u)\leq x_p^{\alpha}$ for $u\in(0,u_{p,1}^{\alpha})$, which leads to:
	\begin{equation}\label{A5}
		\int_0^1\left(g(u)-x_p^{\alpha}\right)_{+}\text{d}u=\sum_{j=1}^{m}\int_{u_{p,2j-1}^{\alpha}}^{u_{p,2j}^{\alpha}}\left(g(u)-x_p^{\alpha}\right) \text{d}u+\int_{u_{p,N_p^{\alpha}}^{\alpha}}^{1}\left(g(u)-x_p^{\alpha}\right) \text{d}u.
	\end{equation}
	For $j=1,...,N_p^{\alpha}-1$, the following integral split holds:
	\begin{equation}\label{A6}\int_{u_{p,j}}^{u_{p,j+1}}\left(g(u)-x_p\right) \text{d}u=\int_{u_{p,j}}^{1}\left(g(u)-x_p\right) \text{d}u-\int_{u_{p,j+1}}^{1}\left(g(u)-x_p\right)\text{d}u.\end{equation}
	Taking \eqref{A6} into account in \eqref{A2}-\eqref{A5}, the four cases reduce to two only, for any $N_p^{\alpha}$. Namely:
	\begin{equation}\label{P2}
		\int_0^1\left(F^{-1}_{S^l}(u)-x_p^{\alpha}\right)_+\text{d}u =\left\{ \begin{array}{ll}
			I_1^\alpha(p), & \quad \text{if\ \ \ } g(u) \leq x_p^{\alpha} \text{\ \ \ for\ \ \ } u\in (0,u_{p,1}^{\alpha}),\\
			I_2^\alpha(p), & \quad \text{if\ \ \ } g(u) \geq x_p^{\alpha} \text{\ \ \ for\ \ \ } u\in (0,u_{p,1}^{\alpha}),\\
		\end{array}\right.
	\end{equation}
	where
	\begin{eqnarray}
		I_1^\alpha(p)&=& \int_{u_{p,1}^{\alpha}}^1\left(g(u)-x_p^{\alpha}\right)\text{d}u-  \underset{j=2}{\overset{N_p^{\alpha}}{\sum}}\left(-1\right)^{j}\int_{u_{p,j}^{\alpha}}^1\left(g(u)-x_p^{\alpha}\right)\text{d}u, \label{P3}\\
		I_2^\alpha(p)&=& \int_0^{u_{p,1}^{\alpha}}\left(g(u)-x_p^{\alpha}\right)\text{d}u +  \underset{j=2}{\overset{N_p^{\alpha}}{\sum}}\left(-1\right)^{j}\int_{u_{p,j}^{\alpha}}^1\left(g(u)-x_p^{\alpha}\right)\text{d}u.\label{P4}
	\end{eqnarray}

	Taking into account the definitions of the TVaR and LTVaR from \eqref{Eq_2_2} and \eqref{Eq_2_3}, respectively, and using the appropriate change of variable for the integral over $F^{-1}_{X_2}(1-u)$, it follows that:
	\begin{equation}\label{P5}
		\int_{u_{p,j}^{\alpha}}^1\left(g(u)-x_p^{\alpha}\right)\text{d}u = \left(1-u_{p,j}^{\alpha}\right)\left(\text{TVaR}_{u_{p,j}^{\alpha}}\left[X_1\right] + \text{LTVaR}_{1-u_{p,j}^{\alpha}}\left[X_2\right]\right) - x_p^{\alpha}\left(1-u_{p,j}^{\alpha}\right),
	\end{equation}
	for all $j=1,...,N_p^{\alpha}$, as well as:
	\begin{equation}\label{P6}
		\int_0^{u_{p,1}^{\alpha}}\left(g(u)-x_p^{\alpha}\right)\text{d}u = u_{p,1}^{\alpha}\left(\text{LTVaR}_{u_{p,1}^{\alpha}}\left[X_1\right] + \text{TVaR}_{1-u_{p,1}^{\alpha}}\left[X_2\right]\right) - x_p^{\alpha} u_{p,1}^{\alpha}.
	\end{equation}
	Plugging \eqref{P5} and \eqref{P6} in \eqref{P3} and \eqref{P4}, the expressions of $I_1^\alpha(p)$ and $I_2^\alpha(p)$ can be written as follows:
	\begin{eqnarray*}
		I_1^\alpha(p) &=& \left(1-u_{p,1}^{\alpha}\right)\left(\text{TVaR}_{u_{p,1}^{\alpha}}\left[X_1\right] + \text{LTVaR}_{1-u_{p,1}^{\alpha}}\left[X_2\right]\right) - x_p^{\alpha}\left(1-u_{p,1}^{\alpha}-\mathcal{D}_{p,N_p^{\alpha}}^\alpha\right) - \mathcal{T}_{p,N_p^{\alpha}}^\alpha,\\
		I_2^\alpha(p) &=& u_{p,1}^{\alpha}\left(\text{LTVaR}_{u_{p,1}^{\alpha}}\left[X_1\right] + \text{TVaR}_{1-u_{p,1}^{\alpha}}\left[X_2\right]\right) - x_p^{\alpha} \left(u_{p,1}^{\alpha} + \mathcal{D}_{p,N_p^{\alpha}}^\alpha\right) + \mathcal{T}_{p,N_p^{\alpha}}^\alpha,
	\end{eqnarray*}
	where $\mathcal{T}_{p,N_p^\alpha}^\alpha$ and $\mathcal{D}_{p,N_p^\alpha}^\alpha$ are defined in \eqref{Eq_5_3}. 
	Using \eqref{P1}, the TVaR of the counter-comonotonic sum is given by
	\begin{eqnarray*}
		(1-p)\text{TVaR}_p\left[ S^l\right]&=&\left\{ \begin{array}{ll}
			t_1^\alpha(p) , & \quad \text{if\ \ \ } g(u) \leq x_p^{\alpha} \text{\ \ \ for\ \ \ } u\in (0,u_{p,1}^{\alpha}),\\
			t_2^\alpha(p), & \quad \text{if\ \ \ } g(u) \geq x_p^{\alpha} \text{\ \ \ for\ \ \ } u\in (0,u_{p,1}^{\alpha}),\\
		\end{array}\right.
	\end{eqnarray*}
	where $t_1^\alpha(p)=I_1^\alpha(p) + x_p^{\alpha}\left(1-p\right)$ and $t_2^\alpha(p)=I_2^\alpha(p) + x_p^{\alpha}\left(1-p\right)$. The proof ends by noting that combining  $x_p^{\alpha}\left(1-p\right)$ with either $-x_p^{\alpha}\left(1-u_{p,1}^{\alpha}-\mathcal{D}_{p,N_p^\alpha}^\alpha\right)$ or $-x_p^{\alpha}\left(u_{p,1}^{\alpha}+\mathcal{D}_{p,N_p^\alpha}^\alpha\right)$, leads to $ x_p^{\alpha}\left(u_{p,1}^{\alpha} + \mathcal{D}_{p,N_p^\alpha}^\alpha - p\right)$ and $x_p^{\alpha} \left(1-u_{p,1}^{\alpha}-\mathcal{D}_{p,N_p^\alpha}^\alpha-p\right)$, respectively.
\end{proof}

For a given $p\in(0,1)$, each choice of $\alpha\in [0,1]$ will result in a specific decomposition of the Tail Value-at-Risk. In case the marginal cdf's $F_{X_1}$ and $F_{X_2}$ are continuous, all the generalized inverses in \eqref{Eq_5_2} coincide and the decomposition of the TVaR is unique. The choice $\alpha=0$ leads to a decomposition in terms of the usual inverse $F_{S^l}^{-1}(p)$.

The decomposition of the TVaR of the sum in the counter-monotonic case involves more terms than in the comonotonic case. In particular, on top of the linear combination involving the first element $u_{p,1}^{\alpha}$ of the set $E_{F^{-1(\alpha)}_{S^l}(p)}$, the sum $\mathcal{T}_{p,N_p^\alpha}^\alpha$ and the terms $F^{-1(\alpha)}_{S^l}(p)\left(u_{p,1}^{\alpha} + \mathcal{D}_{p,N_p^\alpha}^\alpha - p\right)$ and $F^{-1(\alpha)}_{S^l}(p) \left(1-u_{p,1}^{\alpha}-\mathcal{D}_{p,N_p^\alpha}^\alpha-p\right)$ are also required. 

The number of components of the sum $\mathcal{T}_{p,N_p^\alpha}^\alpha$ depends on the number of elements in the set $E_{F_{S^l}^{-1(\alpha)}(p)}$, and $\mathcal{T}_{p,N_p^\alpha}^\alpha$ vanishes when the function $g$ is monotone. The more complex the shape of $g$ is, the more terms may appear in $\mathcal{T}_{p,N_p^\alpha}^\alpha$. A noteworthy remark is that the notation $\mathcal{T}_{p,N_p^\alpha}^\alpha$ is only used to synthesize the result. This notation does not imply that the contribution of this sum to the TVaR of $S^l$ is less important than the contribution of the  first term containing the marginal TVaR and LTVaR related to the first element $u_{p,1}^{\alpha}$. In fact, as it is shown in the numerical examples below, the sum $\mathcal{T}_{p,N_p^\alpha}^\alpha$ can capture a substantial part of TVaR$_p\left[S^l\right]$.

The terms $F^{-1(\alpha)}_{S^l}(p)\left(u_{p,1}^\alpha + \mathcal{D}_{p,N_p^\alpha}^\alpha - p\right)$ and $F^{-1(\alpha)}_{S^l}(p) \left(1-u_{p,1}^\alpha-\mathcal{D}_{p,N_p^\alpha}^\alpha-p\right)$ in \eqref{Eq_5_2} arise because the quantile function $F^{-1(\alpha)}_{S^l}$ may be constant in the neighborhood of $p$, i.e.\ $F^{-1(\alpha)}_{S^l}$ is `flat' in $p$ and hence, $F_{S^l}\left(F_{S^l}^{-1(\alpha)}(p)\right)$ is not necessarily equal to $p$. For the values of $p$ where $F^{-1(\alpha)}_{S^l}$ is strictly increasing, it can be shown that these terms equal $0$. The following lemma proves this result, thereby providing a link between the elements of the set $E_{F^{-1(\alpha)}_{S^l}(p)}$ and $p$ in case the quantile function $F^{-1(\alpha)}_{S^l}$ is strictly increasing in $p$.
\begin{lemma}\label{Lemma-6-1}
	For any $p\in(0,1)$ and $\alpha \in [0,1]$, suppose that the function $F^{-1(\alpha)}_{S^l}$ is strictly increasing in $p$, and let $u_{p,1}^\alpha<...<u_{p,N_p^\alpha}^\alpha$ be the $N_p^\alpha$ ordered elements of the set $E_{F^{-1(\alpha)}_{S^l}(p)}$. Then:
	\begin{equation}\label{Eq-6-2}p = \left\{\begin{array}{ll}u_{p,1}^\alpha+\mathcal{D}_{p,N_p^\alpha}^\alpha,& \qquad \text{if\ \ \ } g(u)\leq F^{-1(\alpha)}_{S^l}(p) \text{\ \ \ for\ \ \ } u\in (0,u_{p,1}^\alpha),\\
	1-u_{p,1}^\alpha-\mathcal{D}_{p,N_p^\alpha}^\alpha,& \qquad \text{if\ \ \ } g(u)\geq F^{-1(\alpha)}_{S^l}(p) \text{\ \ \ for\ \ \ } u\in(0,u_{p,1}^\alpha),
	\end{array}\right.\end{equation}
	where $\mathcal{D}_{p,N_p^\alpha}^\alpha$ is given in \eqref{Eq_5_3}.
\end{lemma}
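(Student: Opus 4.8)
The plan is to reduce the claim to the single identity $p=F_{S^l}(x_p^\alpha)$, where $x_p^\alpha=F^{-1(\alpha)}_{S^l}(p)$, and then to evaluate $F_{S^l}(x_p^\alpha)=\mathbb{P}\left[g(U)\leq x_p^\alpha\right]$ by splitting $(0,1)$ at the elements of $E_{x_p^\alpha}$. First I would observe that when $F^{-1(\alpha)}_{S^l}$ is strictly increasing at $p$ there is no flat segment of the quantile function there, so the left and right inverses coincide and $x_p^\alpha=F^{-1}_{S^l}(p)$ for every $\alpha$. The standard inequality $F_{S^l}\left(F^{-1}_{S^l}(p)\right)\geq p$ then becomes an equality: if one had $F_{S^l}(x_p^\alpha)=p+\delta$ with $\delta>0$, then $F^{-1}_{S^l}(p+\delta/2)\leq x_p^\alpha$, contradicting the assumed strict increase. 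The same argument shows that $S^l$ has no atom at $x_p^\alpha$, so $\mathbb{P}\left[g(U)=x_p^\alpha\right]=0$ and hence $F_{S^l}(x_p^\alpha)=\mathbb{P}\left[g(U)\leq x_p^\alpha\right]=\mathbb{P}\left[g(U)<x_p^\alpha\right]$ equals the Lebesgue measure of the sub-level set $\{u\in(0,1):g(u)<x_p^\alpha\}$.

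Next I would compute that Lebesgue measure using the sign structure already exploited in the proof of Theorem \ref{Theorem-5-1}. The ordered elements $u_{p,1}^\alpha<\dots<u_{p,N_p^\alpha}^\alpha$ of $E_{x_p^\alpha}$ are exactly the points at which $g$ crosses the level $x_p^\alpha$, so on each of the intervals $(0,u_{p,1}^\alpha),(u_{p,1}^\alpha,u_{p,2}^\alpha),\dots,(u_{p,N_p^\alpha}^\alpha,1)$ the difference $g-x_p^\alpha$ keeps a constant sign, and this sign alternates from one interval to the next; the sign on $(0,u_{p,1}^\alpha)$ fixed by the hypothesis then determines it everywhere. (The points where $g$ merely touches $x_p^\alpha$ without crossing, and any segment on which $g\equiv x_p^\alpha$, are irrelevant, the latter because it was just shown to have measure zero.) In the first case of the lemma the sub-level set is the union $(0,u_{p,1}^\alpha)\cup(u_{p,2}^\alpha,u_{p,3}^\alpha)\cup(u_{p,4}^\alpha,u_{p,5}^\alpha)\cup\dots$, and in the second case it is $(u_{p,1}^\alpha,u_{p,2}^\alpha)\cup(u_{p,3}^\alpha,u_{p,4}^\alpha)\cup\dots$.

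Finally I would add up the lengths of these intervals and recognise the result. Writing $u_{p,0}^\alpha=0$ and $u_{p,N_p^\alpha+1}^\alpha=1$, the first-case total telescopes into the alternating sum $u_{p,1}^\alpha-u_{p,2}^\alpha+u_{p,3}^\alpha-\dots$, which, after writing each term with $j\geq 2$ via $1-u_{p,j}^\alpha$ and collecting the constants $\sum_{j\geq2}(-1)^j$, is precisely $u_{p,1}^\alpha+\mathcal{D}_{p,N_p^\alpha}^\alpha$; the second case gives $1-u_{p,1}^\alpha-\mathcal{D}_{p,N_p^\alpha}^\alpha$ in the same way. The factor $(-1)^j$ in $\mathcal{D}_{p,N_p^\alpha}^\alpha$ absorbs the parity of $N_p^\alpha$, so no separate even/odd discussion survives in the final formula, although I expect the bookkeeping of the last interval (whose right endpoint is $1$ rather than some $u_{p,j}^\alpha$) to be the one place requiring care; for infinite $N_p^\alpha$ the series converges because $u_{p,j}^\alpha\uparrow$ is bounded and the terms alternate. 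The genuine conceptual step, rather than the algebra, is the first paragraph: translating ``strictly increasing quantile'' into the exact identity $F_{S^l}(x_p^\alpha)=p$ together with the absence of an atom, since it is precisely the failure of this identity (a flat quantile, i.e.\ $F_{S^l}\left(F^{-1(\alpha)}_{S^l}(p)\right)\neq p$) that produces the extra terms $F^{-1(\alpha)}_{S^l}(p)\left(u_{p,1}^\alpha+\mathcal{D}_{p,N_p^\alpha}^\alpha-p\right)$ appearing in Theorem \ref{Theorem-5-1}.
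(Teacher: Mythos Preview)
Your argument is correct and in fact more direct than the paper's. Both proofs begin the same way: from the strict increase of $F_{S^l}^{-1(\alpha)}$ at $p$ one deduces continuity of $F_{S^l}$ at $x_p^\alpha$, hence $F_{S^l}(x_p^\alpha)=p$ and $\mathbb{P}[g(U)=x_p^\alpha]=0$. From there, however, the paper takes an indirect route: it rewrites $\text{TVaR}_p[S^l]$ as $\frac{1}{1-p}\int_0^1 g(u)\mathbb{I}[g(u)\geq x_p^\alpha]\,\text{d}u$, decomposes this integral over the same alternating intervals used in the proof of Theorem~\ref{Theorem-5-1}, and then compares the resulting expression with the general formula \eqref{Eq_5_2} to conclude that the residual terms $F_{S^l}^{-1(\alpha)}(p)\bigl(u_{p,1}^\alpha+\mathcal{D}_{p,N_p^\alpha}^\alpha-p\bigr)$ and $F_{S^l}^{-1(\alpha)}(p)\bigl(1-u_{p,1}^\alpha-\mathcal{D}_{p,N_p^\alpha}^\alpha-p\bigr)$ must vanish. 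You instead compute $p=F_{S^l}(x_p^\alpha)$ directly as the Lebesgue measure of the sub-level set $\{g<x_p^\alpha\}$, sum the interval lengths, and identify the alternating sum with $u_{p,1}^\alpha+\mathcal{D}_{p,N_p^\alpha}^\alpha$ (respectively $1-u_{p,1}^\alpha-\mathcal{D}_{p,N_p^\alpha}^\alpha$). Your approach is self-contained and does not rely on Theorem~\ref{Theorem-5-1}, whereas the paper's approach leverages that theorem to avoid redoing the alternating-sum bookkeeping explicitly; the trade-off is that the paper's proof makes the link to the TVaR decomposition transparent, while yours isolates the purely measure-theoretic content of the lemma.
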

\begin{proof}
	For any $p\in (0,1)$ and any $\alpha\in[0,1]$, the quantile $F_{S^l}^{-1(\alpha)}$ is strictly increasing in $p$ if and only if the cdf $F_{S^l}$ is continuous \textbf{at} $x_p^{\alpha}=F_{S^l}^{-1(\alpha)}(p)$. In this case, $F_{S^l}\left( x_p^{\alpha}\right)=p$, and hence, $q\leq p$ is equivalent with $F_{S^l}^{-1(\alpha)}(q) \leq  F_{S^l}^{-1(\alpha)}(p)$. 
	Combining this equivalence with the fact that $S^l \overset{d}{=}g(U)$ and $S^l \overset{d}{=}F^{-1(\alpha)}_{S^l}(U)$ for any $\alpha \in [0,1]$, the TVaR can be written as follows:
	\begin{equation}\label{P8}
		\text{TVaR}_p\left[S^l \right]=\mathbb{E}\left[g(U)\mathbb{I}\left[g(U)
		\geq x_p^\alpha \right] \right]=\frac{1}{1-p}\int_0^1 g(u)\mathbb{I}\left[g(u)\geq x_p^\alpha\right]\text{d}u.
	\end{equation}
	
	Further, the function $F^{-1(\alpha)}_{S^l}$ is strictly increasing in $p\in (0,1)$ if and only if $\mathbb{P}\left[S^l=x_p^{\alpha}\right]=\mathbb{P}\left[g(U)=x_p^{\alpha}\right]=0$,
	and hence, $\mathbb{P}\left[g(U)\geq x_p^{\alpha}\right]=\mathbb{P}\left[g(U) > x_p^{\alpha}\right]$. Therefore, the integral in \eqref{P8} can be decomposed as in \eqref{A5}, where the following holds:
	$$
	\int_{u_{p,j}^\alpha}^1g(u)\text{d}u = \left(1-u_{p,j}^\alpha\right)\left(\text{TVaR}_{u_{p,j}^\alpha}\left[X_1\right] + \text{LTVaR}_{1-u_{p,j}^\alpha}\left[X_2\right]\right),
	$$
	for all $j=1,...,N_p^{\alpha}$, where $u_{p,j}^\alpha$ are the $N_p^{\alpha}$ elements of $E_{x_p^{\alpha}}$, as well as:
	$$
	\int_0^{u_{p,1}^\alpha}g(u)\text{d}u = u_{p,1}^\alpha\left(\text{LTVaR}_{u_{p,1}^\alpha}\left[X_1\right] + \text{TVaR}_{1-u_{p,1}^\alpha}\left[X_2\right]\right).
	$$
	In particular, the terms $F^{-1(\alpha)}_{S^l}(p)\left(u_{p,1}^\alpha + \mathcal{D}^\alpha_{p,N_p^{\alpha}} - p\right)$ and $F^{-1(\alpha)}_{S^l}(p) \left(1-u_{p,1}^\alpha-\mathcal{D}^\alpha_{p,N_p^{\alpha}}-p\right)$ in $t_1^\alpha(p)$ and $t_2^\alpha(p)$, respectively, are equal to $0$, which proves \eqref{Eq-6-2}.
\end{proof}

Lemma \ref{Lemma-6-1} shows that the situation where $F^{-1(\alpha)}_{S^l}$ is strictly increasing in $p$ has two consequences. The first consequence is that the expression of TVaR$_p\left[S^l\right]$ can be simplified because $F^{-1(\alpha)}_{S^l}(p)\left(u_{p,1}^\alpha + \mathcal{D}_{p,N_p^\alpha}^\alpha - p\right)=0$ and $F^{-1(\alpha)}_{S^l}(p) \left(1-u_{p,1}^\alpha-\mathcal{D}_{p,N_p^\alpha}^\alpha-p\right)=0$. The second consequence is that, for $N_p^\alpha\geq 2$, it is sufficient to determine $N_p^\alpha-1$ elements, whereas for $N_p^\alpha=1$, $u_{p,1}^\alpha$ is either equal to $p$ or to $1-p$.

The decomposition of TVaR in Theorem \ref{Theorem-5-1} is further simplified in the next section, where a link between $t_1^\alpha(p)$ and $t_2^\alpha(p)$ is established using the decomposition of the upper tail transform.

\subsection{The case where $N_p^\alpha=1$ and $F_{S^l}^{-1(\alpha)}$ is strictly increasing in $p$}
Suppose that $N_p^\alpha=1$, which includes the case where $g$ is monotone. Additionally, suppose that $F_{S^l}^{-1(\alpha)}$ is strictly increasing in $p$. This situation leads to simple decompositions of TVaR$_p\left[S^l\right]$, which are provided in the following corollary. The result follows directly from Theorem \ref{Theorem-5-1} and Lemma \ref{Lemma-6-1}. In particular, since the generalized quantile function $F_{S^l}^{-1(\alpha)}$ is strictly increasing in $p$, then from Lemma \ref{Lemma-6-1}, $u_{p,1}^\alpha=p$ if $g(0)\leq g(1)$, and $u_{p,1}^\alpha=1-p$ if $g(0)\geq g(1)$\footnote{Note that for $N_p^\alpha=1$, the case $g(u)\leq F^{-1(\alpha)}_{S^l}(p)$ for $u\in (0,u_{p,1}^\alpha)$ is equivalent with $g(0)\leq g(1)$, whereas the case $g(u)\geq F^{-1(\alpha)}_{S^l}(p)$ for $u\in (0,u_{p,1}^\alpha)$ is equivalent with $g(0)\geq g(1)$.}.

\begin{corollary}\label{Corollary-1}
	For any $p\in(0,1)$ and $\alpha \in[0,1]$, if $N_p^\alpha=1$ and the function $F^{-1(\alpha)}_{S^l}$ is strictly increasing in $p$, then:
		
	\begin{equation}\label{Eq_c_1}\emph{TVaR}_p\left[S^l\right] = \left\{\begin{array}{ll}\emph{TVaR}_p[X_1]+\emph{LTVaR}_{1-p}[X_2],& \qquad \text{if\ \ \ } g(0)\leq g(1),\\
			\emph{LTVaR}_{1-p}[X_1]+\emph{TVaR}_p[X_2],& \qquad \text{if\ \ \ } g(0)\geq g(1).
		\end{array}\right.\end{equation}

\end{corollary}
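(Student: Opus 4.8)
The plan is to derive Corollary \ref{Corollary-1} as a direct specialization of Theorem \ref{Theorem-5-1} combined with Lemma \ref{Lemma-6-1}, exploiting the hypotheses $N_p^\alpha=1$ and strict monotonicity of $F^{-1(\alpha)}_{S^l}$ in $p$. First I would observe that when $N_p^\alpha=1$, the sums $\mathcal{T}_{p,N_p^\alpha}^\alpha$ and $\mathcal{D}_{p,N_p^\alpha}^\alpha$ in \eqref{Eq_5_3} are empty, since they run from $j=2$ to $N_p^\alpha=1$, and by the stated convention $\sum_{j=2}^1=0$. Hence $\mathcal{T}_{p,1}^\alpha=0$ and $\mathcal{D}_{p,1}^\alpha=0$, so the expressions for $t_1^\alpha(p)$ and $t_2^\alpha(p)$ in \eqref{Eq_5_2} collapse to their leading terms together with the residual terms $F^{-1(\alpha)}_{S^l}(p)\left(u_{p,1}^\alpha-p\right)$ and $F^{-1(\alpha)}_{S^l}(p)\left(1-u_{p,1}^\alpha-p\right)$, respectively.

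Next I would invoke Lemma \ref{Lemma-6-1}, whose hypothesis (strict monotonicity of $F^{-1(\alpha)}_{S^l}$ in $p$) is exactly what the corollary assumes. Specializing \eqref{Eq-6-2} to $N_p^\alpha=1$ (so again $\mathcal{D}_{p,1}^\alpha=0$) yields $p=u_{p,1}^\alpha$ in the case $g(u)\leq F^{-1(\alpha)}_{S^l}(p)$ on $(0,u_{p,1}^\alpha)$, and $p=1-u_{p,1}^\alpha$, i.e.\ $u_{p,1}^\alpha=1-p$, in the case $g(u)\geq F^{-1(\alpha)}_{S^l}(p)$ on that interval. Lemma \ref{Lemma-6-1} simultaneously guarantees that the residual terms vanish, so substituting $u_{p,1}^\alpha=p$ into $t_1^\alpha(p)$ gives $(1-p)\left(\text{TVaR}_p[X_1]+\text{LTVaR}_{1-p}[X_2]\right)$, and substituting $u_{p,1}^\alpha=1-p$ into $t_2^\alpha(p)$ gives $(1-p)\left(\text{LTVaR}_{1-p}[X_1]+\text{TVaR}_p[X_2]\right)$. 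Dividing through by the factor $1-p$ coming from \eqref{Eq_5_1} then produces the two clean linear combinations in \eqref{Eq_c_1}.

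The only remaining step is to translate the case-splitting condition from Theorem \ref{Theorem-5-1}, phrased in terms of the sign of $g-F^{-1(\alpha)}_{S^l}(p)$ on $(0,u_{p,1}^\alpha)$, into the condition $g(0)\leq g(1)$ versus $g(0)\geq g(1)$ stated in the corollary. This is precisely the content of the footnote accompanying the corollary: when the set $E_{F^{-1(\alpha)}_{S^l}(p)}$ has a single element, $g$ crosses the level exactly once, so the behavior of $g$ on the initial interval $(0,u_{p,1}^\alpha)$ determines, and is determined by, whether $g$ ends higher or lower than it starts. I would verify this equivalence by noting that with a single crossing point $u_{p,1}^\alpha$, the function $g$ lies entirely on one side of the level before the crossing and entirely on the other side after; thus $g(u)\leq F^{-1(\alpha)}_{S^l}(p)$ near $0$ forces $g(0)\leq F^{-1(\alpha)}_{S^l}(p)\leq g(1)$, and conversely.

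I expect the main obstacle to be purely expository rather than mathematical: making the reduction of the empty sums transparent and rigorously justifying the equivalence between the sign condition and $g(0)\lessgtr g(1)$ in the single-crossing case. The algebraic substitutions are routine once Lemma \ref{Lemma-6-1} is applied, so the bulk of the argument is really a careful bookkeeping of which terms drop out under $N_p^\alpha=1$. Since the corollary is explicitly flagged as following directly from Theorem \ref{Theorem-5-1} and Lemma \ref{Lemma-6-1}, the proof can be kept very short, essentially citing these two results and recording the simplifications above.
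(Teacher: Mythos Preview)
Your proposal is correct and follows essentially the same approach as the paper, which simply states that the corollary follows directly from Theorem \ref{Theorem-5-1} and Lemma \ref{Lemma-6-1}, with $u_{p,1}^\alpha=p$ when $g(0)\leq g(1)$ and $u_{p,1}^\alpha=1-p$ when $g(0)\geq g(1)$. Your write-up is a faithful unpacking of this, including the observation that the empty sums $\mathcal{T}_{p,1}^\alpha$ and $\mathcal{D}_{p,1}^\alpha$ vanish and the translation of the sign condition via the single-crossing argument (which the paper relegates to a footnote).
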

 
The case where $g(0)\leq g(1)$ leads to a decomposition which is consistent with that of \cite{Chaoubi} for strictly increasing functions $g$. \textbf{Specifically, in Proposition 1 of \cite{Chaoubi}, the authors derive the decomposition that follows from $g(0)\leq g(1)$ by assuming that $X_1$ and $X_2$ are continuous random variables ordered in the dispersive order sense.} Thus, the present Corollary \ref{Corollary-1} generalizes their expression by stating that their decomposition does not require the function $g$ to be globally monotone, nor both marginal random variables to be continuous. Indeed, $N_p^\alpha=1$ and local strict increasingness of $F_{S^l}^{-1(\alpha)}$ in $p$ are sufficient.


Interestingly, under these conditions, the dependence uncertainty spread (i.e.\ the difference between the upper bound and the lower bound) can be expressed in terms of the TVaR and LTVaR of only one of the random variables. In particular, using the decomposition of the upper bound TVaR$_p[S^u]$, it follows that if $N_p^\alpha=1$ and $F_{S^l}^{-1(\alpha)}$ is strictly increasing in $p$, then:
\begin{equation}\label{Eq_5_4}\text{TVaR}_p[S^u]-\text{TVaR}_p[S^l] = \left\{\begin{array}{ll}\text{TVaR}_p\left[X_2\right] - \text{LTVaR}_{1-p}\left[X_2\right],& \quad \text{if\ \ \  } g(0)\leq g(1),\\
		\text{TVaR}_p\left[X_1\right] - \text{LTVaR}_{1-p}\left[X_1\right],& \quad \text{if\ \ \  }g(0)\geq g(1).
	\end{array}\right.\end{equation}
Thus, the dependence uncertainty spread of TVaR$_p\left[S^l\right]$ depends on the tail distribution of one of the two random variables only.

Suppose that the function $g$ is strictly monotone, which is a special case of the conditions $N_p^\alpha=1$ and $F_{S^l}^{-1(\alpha)}$ is strictly increasing in $p$. Since both the TVaR and the LTVaR are non-decreasing functions of $p$, it follows from \eqref{Eq_5_4} that the dependence uncertainty spread is a non-decreasing function of the level $p$. This result implies that in case $g$ is strictly monotone, a more conservative risk measurement (i.e.\ higher levels of $p$) leads to a larger dependence uncertainty gap. This is in general not the case, as the dependence uncertainty spread can be a non-monotone function of $p$. This can be seen by combining the decomposition of the upper bound TVaR$_p[S^u]$ in \eqref{Eq_2_10} and that of the lower bound given in Theorem \ref{Theorem-5-1}.

\subsection{Illustration}
The decomposition of the TVaR is illustrated using the examples from Figure \ref{Figure1}, with a particular emphasis on the components of the sum $\mathcal{T}_{p,N_p^\alpha}^\alpha$. The parameter $\alpha$ is set to $0$, i.e.\ $F^{-1(\alpha)}_{S^l}(p)=F^{-1}_{S^l}(p)$, and for simplicity, the superscript $\alpha$ is dropped from all relevant quantities. 

In both cases of Figure \ref{Figure1}, it holds that $g(u)\geq F^{-1}_{S^l}(p)$ for $u\in(0,u_{p,1})$. This means that the decomposition is given by $t_2(p)$. Further, as noted previously, Lemma \ref{Lemma-6-1} leads to $p=1-u_{p,1}-\mathcal{D}_{p,N_p}$, which is due to the fact that $g$ does not have flat parts, and hence, the last term in $t_2(p)$ vanishes. This can also be verified numerically using the values given in \eqref{Eq_6_3} and \eqref{Eq_6_4}. Therefore, in both cases, the expression of TVaR$_p\left[S^l\right]$ is given by:
$$\text{TVaR}_p\left[S^l\right]=u_{p,1}\left(\text{LTVaR}_{u_{p,1}}\left[X_1\right]+\text{TVaR}_{1-u_{p,1}}\left[X_2\right]\right) + \mathcal{T}_{p,N_p},$$
where $ \mathcal{T}_{p,N_p}$ is given in \eqref{Eq_5_3} with $\alpha=0$.

\begin{figure}[!h]
	\begin{center}
		\includegraphics[scale=0.6]{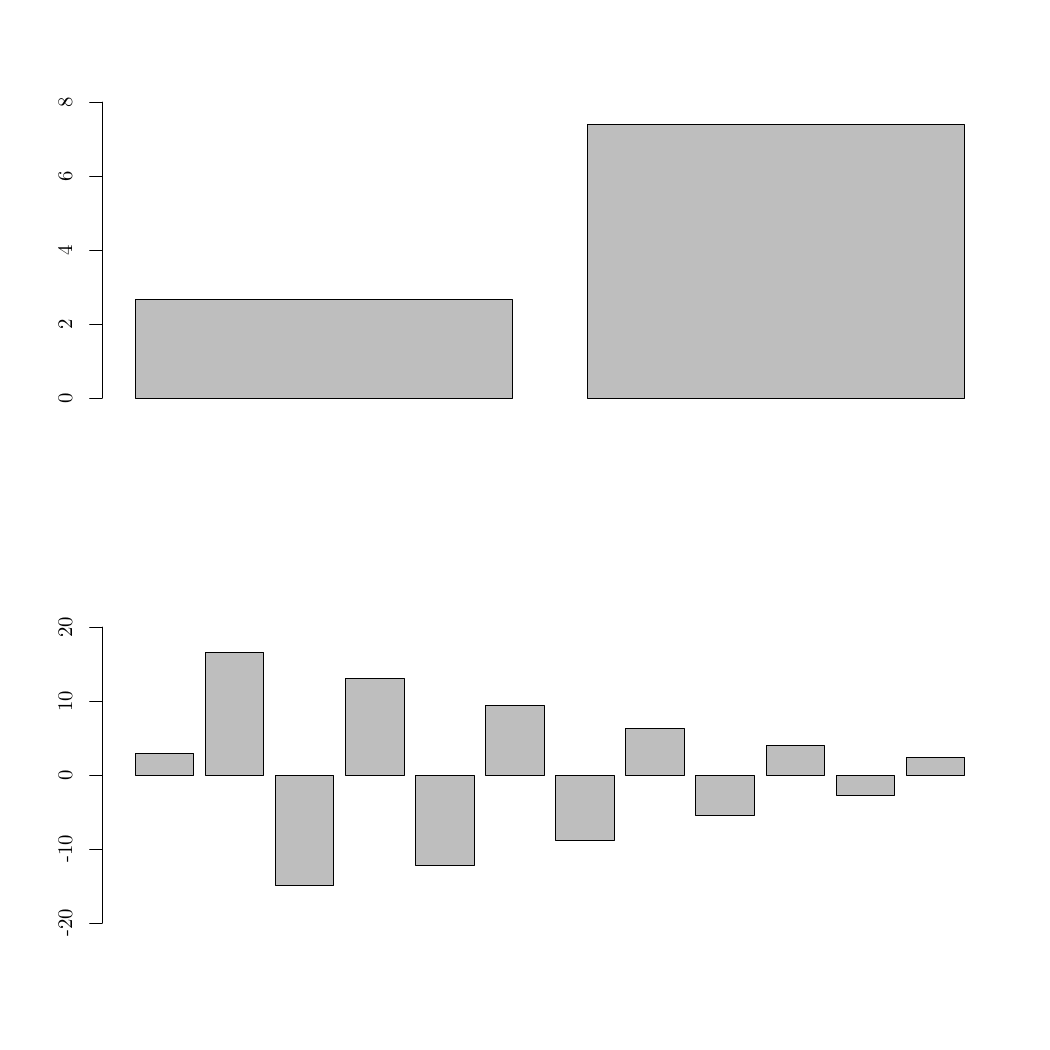}
		\caption{\footnotesize Components of the TVaR decomposition for the two examples of the function $g$ from Figure \ref{Figure1} in the order they appear in the decomposition.}
		\label{Figure2}
	\end{center}
\end{figure}

The set $E_{F^{-1}_{S^l}(0.95)}$ contains two elements in the first example with two Gamma distributed random variables. This means that $N_p=2$, and that $t_2(0.95)$ has two components. The first component is $\frac{u_{p,1}}{1-p}\left(\text{LTVaR}_{u_{p,1}}[X_1]+\text{TVaR}_{1-u_{p,1}}[X_2]\right)$ and the second component is $\mathcal{T}_{p,N_p}=\frac{1-u_{p,2}}{1-p}\left(\text{TVaR}_{u_{p,2}}[X_1]+\text{LTVaR}_{1-u_{p,2}}[X_2]\right)$. Both are displayed on the top panel of Figure \ref{Figure2}, where the first component is on the left and the second component is on the right. The figure shows that $\mathcal{T}_{0.95,N_p}$ is substantially larger than the first component, and largely contributes to the TVaR of the counter-monotonic difference.

In the second example involving Gamma and Poisson random variables, the set $E_{F^{-1}_{S^l}(0.5)}$ contains twelve elements, which means that $t_2(0.5)$ has twelve components. The first component is  $\frac{u_{p,1}}{1-p}\left(\text{LTVaR}_{u_{p,1}}[X_1]+\text{TVaR}_{1-u_{p,1}}[X_2]\right)$. The remaining eleven components with alternating sign are $(-1)^j\frac{1-u_{p,j}}{1-p}\left(\text{TVaR}_{u_{p,j}}[X_1]+\text{LTVaR}_{1-u_{p,j}}[X_2]\right)$, for $j=2,...,12$, where their sum is $\mathcal{T}_{p,N_p}$. These twelve terms are displayed in the bottom panel of Figure \ref{Figure2} in the order in which they appear in the expression, from left to right. Note that the decomposition leads to a TVaR of the counter-monotonic sum equal to $10.51$. In this example, the first component does not contribute as much as the remaining ones. Further, from the second onward, the amplitudes are decreasing. This is due to the fact that the components are weighted by $(1-u_{p,j})$, and hence, additional terms weighted by values of $u_{p,j}$ close to $1$ have less importance.

\textbf{Recall that the TVaR of the comonotonic sum $S^u$ at any level $p\in[0,1]$ satisfies the simple decomposition $\text{TVaR}_p[X_1]+\text{TVaR}_p[X_2]$ in \eqref{Eq_2_10}. Thus, it is worth verifying if the simple decomposition in Corollary \ref{Corollary-1} is an accurate approximation for the TVaR of the counter-monotonic sum $S^l$. Specifically, using the notation $\tilde{t}_1(p)=\text{TVaR}_p[X_1] + \text{LTVaR}_{1-p}[X_2]$ and $\tilde{t}_2(p)=\text{LTVaR}_{1-p}[X_1] + \text{TVaR}_p[X_2]$, the aim now is to verify using the two examples from Figure \ref{Figure1} whether either of the approximations $\text{TVaR}_p[S^l]\approx \tilde{t}_1(p)$ or $\text{TVaR}_p[S^l]\approx \tilde{t}_2(p)$ holds. }

\textbf{Figure \ref{Figure2Add} displays the relative differences (expressed in percentages) between the counter-monotonic TVaR and the approximations $\tilde{t}_1(p)$ (black straight lines) and $\tilde{t}_2(p)$ (blue dashed lines) in function of the level $p$ for both the first example (left panels) and the second example (right panels), where the bottom panels display the corresponding differences in the dependence uncertainty spread. For both examples, it appears that the approximations underestimate the true value of the counter-monotonic TVaR. The approximation $\tilde{t}_1(p)$ underestimates TVaR$_p[S^l]$ by up to $5\%$ for the Gamma-Gamma example and by up to $6\%$ for the Gamma-Poisson example. The second approximation $\tilde{t}_2(p)$ leads to comparatively larger relative differences in the first example with two Gamma distributed random variable, where TVaR$_p[S^l]$ is underestimated by up to $-14\%$. Further, whereas the differences pertaining to the approximation of TVaR$_p[S^l]$ are already substantial, they are exacerbated for the dependence uncertainty spread. Indeed, for both examples, the dependence uncertainty spread would be overestimated by up to $70\%$ in the first example and up to $40\%$ in the second example if one of the approximations were to be used. Therefore, naively approximating TVaR$_p[S^l]$ by the simple decompositions from Corollary \ref{Corollary-1} is not recommended.}

\begin{figure}[!h]
	\begin{center}
		\includegraphics[scale=0.9]{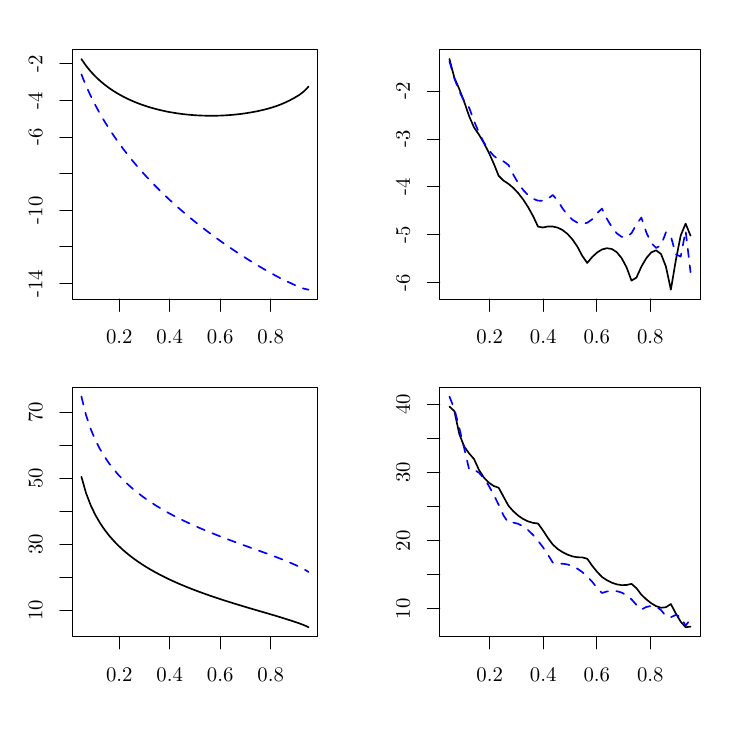}
		\caption{\footnotesize Relative differences (expressed in percentages) between the counter-monotonic TVaR and the approximations $\tilde{t}_1(p)$ (black straight lines) and $\tilde{t}_2(p)$ (blue dashed lines) in function of the level $p$ for both the first example (left panels) and the second example (right panels) displayed on Figure \ref{Figure1}, where the bottom panels display the corresponding differences in the dependence uncertainty spread.}
		\label{Figure2Add}
	\end{center}
\end{figure}

\section{Upper tail transform of the counter-monotonic sum}\label{Sec:SL}
\subsection{Main results}

The following theorem provides a decomposition formula for the upper tail transform of the  counter-monotonic stop-loss premium.
\begin{theorem}\label{Theorem-4-2}
	For any $x\in\left(x^{\min},x^{\max}\right)$, let $u_{x,1}<...<u_{x,N_x}$ be the $N_x$ ordered elements of the set $E_{x}$. The \emph{upper tail transform} at the level $x$ of $S^l$ can be expressed as follows:
	\begin{equation}\label{Eq_4_3x}
		\pi_{S^l}(x)=\left\{
		\begin{array}{c}
			s_1(x), \text{\qquad if\ \ \ }g(u)\leq x\text{\ \ \ for\ \ \ }u\in
			\left(0,u_{x,1}\right),  \\
			s_2(x), \text{\qquad if\ \ \ }g(u)\geq x\text{\ \ \ for\ \ \ }u\in
			\left(0,u_{x,1}\right),
		\end{array}\right.
	\end{equation}
	where:
	\begin{equation}\label{Eq_4_4x}
		\begin{array}{rl}
			s_1(x)  = & \pi_{X_1}\left(F^{-1}_{X_1}\left(u_{x,1}\right)\right)
			-\lambda_{X_2}\left(F^{-1}_{X_2}\left(1-u_{x,1}\right)\right) -\mathcal{S}_{x,N_x} + (1-u_{x,1})\left(g(u_{x,1})-x\right)- \mathcal{J}_{x,N_x},\\
			s_2(x)  = &\pi_{X_2}\left(F^{-1}_{X_2}\left(1-u_{x,1}\right)\right) - \lambda_{X_1}\left(F^{-1}_{X_1}\left(u_{x,1}\right)\right)+\mathcal{S}_{x,N_x} + u_{x,1}\left(g\left(u_{x,1}\right) -x\right) + \mathcal{J}_{x,N_x},\end{array}
	\end{equation}
	with
	\begin{equation}\label{Eq_4_5x}
		\begin{array}{rl}
			\mathcal{S}_{x,N_x}=&\underset{j=2}{\overset{N_x}{\sum}}\left( -1\right) ^{j}\left(\pi_{X_1}\left(F^{-1}_{X_1}(u_{x,j})\right) -\lambda_{X_2}\left(F^{-1}_{X_2}(1-u_{x,j})\right) \right),\\
			\mathcal{J}_{x,N_x} =& \underset{j=2}{\overset{N_x}{\sum}}(-1)^j(1-u_{x,j})\left(g(u_{x,j})-x\right),\end{array}
	\end{equation}
	and $\sum_{j=2}^1=0$ by convention.
\end{theorem}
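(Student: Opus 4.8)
The plan is to start from the representation $\pi_{S^l}(x) = \int_0^1 (g(u)-x)_+\,\mathrm{d}u$, which is immediate from the definition \eqref{Eq_2_4} of the upper tail transform together with $S^l \overset{d}{=} g(U)$ and $g$ as in \eqref{Eq_3_2}. This is exactly the integral already dissected in the proof of Theorem \ref{Theorem-5-1} (the object on the right of \eqref{P1_1}), so the combinatorial core of that argument can be reused with $x_p^{\alpha}$ replaced by the generic level $x$. First I would partition $(0,1)$ at the ordered elements $u_{x,1}<\dots<u_{x,N_x}$ of $E_x$; by the definition of $E_x$ the sign of $g(u)-x$ alternates across consecutive subintervals and is pinned down by its sign on $(0,u_{x,1})$. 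Splitting by the parity of $N_x$ and this sign gives four cases, which collapse to the two cases of \eqref{Eq_4_3x} after the telescoping identity $\int_{u_{x,j}}^{u_{x,j+1}} = \int_{u_{x,j}}^1 - \int_{u_{x,j+1}}^1$, leaving $s_1(x)$ and $s_2(x)$ expressed through the building blocks $\int_{u_{x,1}}^1(g(u)-x)\,\mathrm{d}u$, $\int_0^{u_{x,1}}(g(u)-x)\,\mathrm{d}u$, and the alternating sum $\sum_{j=2}^{N_x}(-1)^j \int_{u_{x,j}}^1(g(u)-x)\,\mathrm{d}u$. Unlike the TVaR proof there is no extra additive $x(1-p)$ term here, so $s_1$ and $s_2$ are these quantities directly.

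The genuinely new step is to recast the building-block integrals in terms of the tail transforms $\pi$ and $\lambda$ rather than the TVaR and LTVaR. Writing $g(u) = F_{X_1}^{-1}(u) + F_{X_2}^{-1}(1-u)$ and changing variables $v=1-u$ in the second summand recovers the TVaR/LTVaR form of \eqref{P5}, namely $\int_{u_{x,j}}^1 (g(u)-x)\,\mathrm{d}u = (1-u_{x,j})\big(\text{TVaR}_{u_{x,j}}[X_1] + \text{LTVaR}_{1-u_{x,j}}[X_2]\big) - x(1-u_{x,j})$. Applying the put–call parity links \eqref{Eq_2_7} and \eqref{Eq_2_7x} with $\alpha=0$ to each marginal converts $(1-u_{x,j})\text{TVaR}_{u_{x,j}}[X_1]$ into $\pi_{X_1}(F_{X_1}^{-1}(u_{x,j})) + (1-u_{x,j})F_{X_1}^{-1}(u_{x,j})$ and $(1-u_{x,j})\text{LTVaR}_{1-u_{x,j}}[X_2]$ into $(1-u_{x,j})F_{X_2}^{-1}(1-u_{x,j}) - \lambda_{X_2}(F_{X_2}^{-1}(1-u_{x,j}))$. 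Summing and recognizing $F_{X_1}^{-1}(u_{x,j}) + F_{X_2}^{-1}(1-u_{x,j}) = g(u_{x,j})$ produces the clean identity $\int_{u_{x,j}}^1 (g(u)-x)\,\mathrm{d}u = \pi_{X_1}(F_{X_1}^{-1}(u_{x,j})) - \lambda_{X_2}(F_{X_2}^{-1}(1-u_{x,j})) + (1-u_{x,j})(g(u_{x,j})-x)$, and an entirely parallel computation gives $\int_0^{u_{x,1}}(g(u)-x)\,\mathrm{d}u = \pi_{X_2}(F_{X_2}^{-1}(1-u_{x,1})) - \lambda_{X_1}(F_{X_1}^{-1}(u_{x,1})) + u_{x,1}(g(u_{x,1})-x)$.

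Substituting these two identities into the surviving cases finishes the proof by inspection: the $\pi_{X_1}/\lambda_{X_2}$ contributions from the indices $j\geq 2$ assemble into $\mathcal{S}_{x,N_x}$, the boundary contributions $(1-u_{x,j})(g(u_{x,j})-x)$ for $j\geq 2$ assemble into $\mathcal{J}_{x,N_x}$ of \eqref{Eq_4_5x}, and the $j=1$ (respectively the $\int_0^{u_{x,1}}$) terms supply the remaining explicit terms of $s_1(x)$ (respectively $s_2(x)$) in \eqref{Eq_4_4x}. I expect the main obstacle to be purely careful bookkeeping of the boundary terms $g(u_{x,j})$: these are evaluated at the left-continuous quantile values, so $g(u_{x,j})=x$ and the term vanishes precisely when $g$ is continuous at $u_{x,j}$, while $g(u_{x,j})\neq x$ at a jump, which is exactly what populates $\mathcal{J}_{x,N_x}$ and constitutes the discrete correction absent from the continuous-marginal treatments of \cite{LaurenceWang2009} and \cite{Chaoubi}. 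Finally, the countably-infinite case $N_x=\infty$ would be handled by applying the finite argument to the first $N$ crossings and letting $N\to\infty$, the residual tail integrals vanishing because $\pi_{S^l}(x)$ is assumed finite.
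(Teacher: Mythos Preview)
Your proposal is correct and the core identity you isolate,
\[
\int_{u_{x,j}}^1 (g(u)-x)\,\mathrm{d}u = \pi_{X_1}\bigl(F_{X_1}^{-1}(u_{x,j})\bigr) - \lambda_{X_2}\bigl(F_{X_2}^{-1}(1-u_{x,j})\bigr) + (1-u_{x,j})\bigl(g(u_{x,j})-x\bigr),
\]
is exactly the content of the paper's equation \eqref{A18} after rearranging. The organizational difference is that the paper does not redo the integral dissection: it invokes the \emph{statement} of Theorem \ref{Theorem-5-1} (the formulas for $t_1^\alpha(p)$ and $t_2^\alpha(p)$), applies \eqref{A18} to convert each TVaR/LTVaR term into a $\pi/\lambda$ term, and then uses $\pi_{S^l}(x)=(1-p)\text{TVaR}_p[S^l]-(1-p)x$ to read off $s_1$ and $s_2$, recording the link $t_i^\alpha(p)=s_i(x)+(1-p)x$ in \eqref{A16}. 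Your route is more self-contained (it needs only the combinatorial splitting \eqref{P2}--\eqref{P4} from the TVaR proof, not the TVaR theorem itself) and arguably cleaner; the paper's route is shorter given that Theorem \ref{Theorem-5-1} is already in hand, and it yields \eqref{A16} as a by-product, which is reused in the proof of Lemma \ref{Corollary-4-1}.
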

\begin{proof}
	For $x\in \left(x^{\min},x^{\max}\right)$, there always exist $p\in(0,1)$ and $\alpha\in[0,1]$ such that $x = F_{S^l}^{-1(\alpha)}(p) $. Let $u_{x,1},...,u_{x,N_x}$ be the $N_x$ elements of the set $E_{x}$. Note that $u_{x,j}=u_{p,j}^\alpha$ for all $j=1,...,N_x$, and $N_x=N_p^{\alpha}$. Using the relationships \eqref{Eq_2_7} and \eqref{Eq_2_7x}, it follows that:
	\begin{eqnarray}
		(1-u_{x,j})\left(\text{TVaR}_{u_{x,j}}[X_1]+\text{LTVaR}_{1-u_{x,j}}[X_2]\right) & = &  (1-u_{x,j})g(u_{x,j}) \label{A18}\\
		&+& \pi_{X_1}\left(F^{-1}_{X_1}(u_{x,j})\right)  - \lambda_{X_2}\left(F^{-1}_{X_2}(1-u_{x,j})\right). \nonumber
	\end{eqnarray}
	Define the quantity $A$ as $A=(1-u_{x,1})\left(\text{TVaR}_{u_{x,1}}[X_1]+\text{LTVaR}_{1-u_{x,1}}[X_2]\right)-\mathcal{T}_{p,N_p^{\alpha}}^\alpha$,
	where $\mathcal{T}_{p,N_p^{\alpha}}^\alpha$ is given by \eqref{Eq_5_3}.
	Multiplying Expression \eqref{A18} by $(-1)^j$ and summing over $j$ leads to:
$$A=\pi_{X_1}\left(F^{-1}_{X_1}(u_{x,1})\right)  - \lambda_{X_2}\left(F^{-1}_{X_2}(1-u_{x,1})\right)- \mathcal{S}_{x,N_x} + (1-u_{x,1})g(u_{x,1}) - \underset{j=2}{\overset{N_x}{\sum}}(-1)^j\left(1-u_{x,j}\right)g(u_{x,j}),$$
	where  $\mathcal{S}_{x,N_x}$ is given by \eqref{Eq_4_5x}. By adding $x\left(u_{x,1} + \mathcal{D}_{p,N_p^\alpha}^\alpha - p\right)$ to the left-hand side, where $\mathcal{D}_{p,N_p^\alpha}^\alpha$ is defined in \eqref{Eq_5_3}, it follows that:
	\begin{equation*}\begin{array}{rcl}t_1^\alpha(p)&=&\pi_{X_1}\left(F^{-1}_{X_1}(u_{x,j})\right)  - \lambda_{X_2}\left(F^{-1}_{X_2}(1-u_{x,j})\right)-\mathcal{S}_{x,N_x}+ (1-u_{x,1})g(u_{x,1})\\
		&&  - \underset{j=2}{\overset{N_x}{\sum}}(-1)^j\left(1-u_{x,j}\right)g(u_{x,j}) + x\left(u_{x,1}-1 + \mathcal{D}_{p,N_p^{\alpha}}^\alpha + (1-p)\right),\end{array}
	\end{equation*}
	which can be rearranged such that $t_1^\alpha(p)=s_1(x)   +(1-p)x$. The same reasoning allows to express $t_2^\alpha(p)$ in terms of $s_2(x)$. Therefore, it follows that:
	\begin{equation}\label{A16}
			t_1^\alpha(p)=s_1\left(x\right)+ (1-p)x,\quad \text{and} \quad
			t_2^\alpha(p)=s_2\left(x\right)+(1-p)x,
	\end{equation}
	for any $x \in \left(x^{\min},x^{\max}\right)$. Note also that \eqref{Eq_2_7} immediately leads to $		\pi_{S^{l}}\left(x\right) = \left(1-p\right)\text{TVaR}_p\left[S^{l}\right] - \left(1-p\right)x.$ Combining the latter expression of $\pi_{S^l}(x)$ and \eqref{A16} ends the proof.
\end{proof}

The stop-loss premium of the counter-monotonic sum is equal to the difference of an upper tail transform of one component and a lower tail transform of the other component, but analogously to the decomposition of TVaR$_p\left[S^l\right]$, additional terms appear that depend on the shape of the function $g$. In particular, the sum $\mathcal{S}_{x,N_x}$ accounts for the fact that the function $g$ is not necessarily monotone, and vanishes when $N_x=1$, i.e.\ $\mathcal{S}_{x,1}=0$. The sum $\mathcal{J}_{x,N}$ as well as $(1-u_{x,1})\left(g(u_{x,1})-x\right)$ and $u_{x,1}\left(g(u_{x,1})-x\right)$ appear in case the function $g$ has a jump \textbf{at} one of the elements of $E_{x}$. In general, these terms vanish only when $g$ is continuous \textbf{at} all the elements of $E_x$. This is because if $g$ is continuous \textbf{at} an element $u_{x,j}$, then $g(u_{x,j})=x$.

The following theorem provides an alternative expression for $s_1(x)$ and $s_2(x)$. This alternative decomposition expresses the retentions in terms of the generalized inverses of $X_1$ and $X_2$ instead of left inverses $F_{X_1}^{-1}$ and $F_{X_2}^{-1}$.

\begin{theorem}\label{Theorem-4-1}
	For any $x\in\left(x^{\min},x^{\max}\right)$, let $u_{x,1}<...<u_{x,N_x}$ be the $N_x$ ordered elements of the set $E_{x}$. The \emph{upper tail transform} at the level $x$ of $S^l$ can be expressed as follows:
	\begin{equation}\label{Eq_4_3}
		\pi_{S^l}(x)=\left\{
		\begin{array}{c}
			s_1(x), \text{\qquad if\ \ \ }g(u)\leq x\text{\ \ \ for\ \ \ }u\in
			\left(0,u_{x,1}\right),  \\
			s_2(x), \text{\qquad if\ \ \ }g(u)\geq x\text{\ \ \ for\ \ \ }u\in
			\left(0,u_{x,1}\right),
		\end{array}\right.
	\end{equation}
	where:
	\begin{equation}\label{Eq_4_4}
		\begin{array}{rl}
			s_1(x)  = & \pi_{X_1}\left(F^{-1\left(\alpha_{x,1}\right)}_{X_1}\left(u_{x,1}\right)\right)
			-\lambda_{X_2}\left(F^{-1\left(1-\alpha_{x,1}\right)}_{X_2}\left(1-u_{x,1}\right)\right) -\mathcal{S}_{x,N_x} ,\\
			s_2(x)  = &\pi_{X_2}\left(F^{-1\left(1-\alpha_{x,1}\right)}_{X_2}\left(1-u_{x,1}\right)\right) - \lambda_{X_1}\left(F^{-1\left(\alpha_{x,1}\right)}_{X_1}\left(u_{x,1}\right)\right)+\mathcal{S}_{x,N_x} ,\end{array}
	\end{equation}
	with
	\begin{equation}\label{Eq_4_5}
		\begin{array}{rl}
			\mathcal{S}_{x,N_x}=&\underset{j=2}{\overset{N_x}{\sum}}\left( -1\right) ^{j}\left(\pi_{X_1}\left(F^{-1\left(\alpha_{x,j}\right)}_{X_1}(u_{x,j})\right) -\lambda_{X_2}\left(F^{-1\left(1-\alpha_{x,j}\right)}_{X_2}(1-u_{x,j})\right) \right),
		\end{array}
	\end{equation}
	and $\sum_{j=2}^1=0$ by convention,
	and for $j=1,2,\ldots,N_x$, $\alpha_{x,j}$'s are determined from:
	\begin{equation}\label{Eq_4_6}
		x=F^{-1(\alpha_{x,j})}_{X_{1}}(u_{x,j})+F^{-1(1-\alpha_{x,j})}_{X_{2}}(1-u_{x,j}).
	\end{equation} 
\end{theorem}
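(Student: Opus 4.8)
The plan is to derive Theorem \ref{Theorem-4-1} directly from the already-established Theorem \ref{Theorem-4-2} by showing that the two forms of the retentions coincide. Since both theorems produce the same case split (the sign of $g(u)-x$ on $\left(0,u_{x,1}\right)$), it suffices to prove that the expression for $s_1(x)$ in \eqref{Eq_4_4} equals the one in \eqref{Eq_4_4x}, and likewise for $s_2(x)$. The essential observation is that the generalized inverses $F^{-1(\alpha_{x,j})}_{X_1}(u_{x,j})$ and $F^{-1(1-\alpha_{x,j})}_{X_2}(1-u_{x,j})$, with $\alpha_{x,j}$ given by Lemma \ref{Lemma-4-1}, differ from the left inverses only along the horizontal segments of the marginal cdf's, and that the induced corrections to the tail transforms reproduce exactly the jump terms $(1-u_{x,1})\bigl(g(u_{x,1})-x\bigr)$, $u_{x,1}\bigl(g(u_{x,1})-x\bigr)$ and $\mathcal{J}_{x,N_x}$ appearing in \eqref{Eq_4_4x}--\eqref{Eq_4_5x}.

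First I would quantify, for each element $u_{x,j}\in E_x$, the effect of replacing a left inverse by a generalized inverse inside a tail transform. Writing $\pi_{X_1}(y)=\int_y^{+\infty}\bigl(1-F_{X_1}(t)\bigr)\text{d}t$ and using that $F_{X_1}(t)=u_{x,j}$ for every $t$ in the open interval $\left(F^{-1}_{X_1}(u_{x,j}),F^{-1+}_{X_1}(u_{x,j})\right)$, one obtains
$$\pi_{X_1}\!\left(F^{-1(\alpha_{x,j})}_{X_1}(u_{x,j})\right)=\pi_{X_1}\!\left(F^{-1}_{X_1}(u_{x,j})\right)-(1-u_{x,j})\,\alpha_{x,j}\left(F^{-1+}_{X_1}(u_{x,j})-F^{-1}_{X_1}(u_{x,j})\right),$$
and from $\lambda_{X_2}(y)=\int_{-\infty}^{y}F_{X_2}(t)\text{d}t$ together with $F_{X_2}(t)=1-u_{x,j}$ on $\left(F^{-1}_{X_2}(1-u_{x,j}),F^{-1+}_{X_2}(1-u_{x,j})\right)$,
$$\lambda_{X_2}\!\left(F^{-1(1-\alpha_{x,j})}_{X_2}(1-u_{x,j})\right)=\lambda_{X_2}\!\left(F^{-1}_{X_2}(1-u_{x,j})\right)+(1-u_{x,j})(1-\alpha_{x,j})\left(F^{-1+}_{X_2}(1-u_{x,j})-F^{-1}_{X_2}(1-u_{x,j})\right).$$
Analogous identities hold for $\pi_{X_2}$ and $\lambda_{X_1}$, now carrying the weight $u_{x,j}$ instead of $1-u_{x,j}$, since on the relevant flat segments the integrands become $1-F_{X_2}=u_{x,j}$ and $F_{X_1}=u_{x,j}$.

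The crux is then the single algebraic identity
$$\alpha_{x,j}\left(F^{-1+}_{X_1}(u_{x,j})-F^{-1}_{X_1}(u_{x,j})\right)+(1-\alpha_{x,j})\left(F^{-1+}_{X_2}(1-u_{x,j})-F^{-1}_{X_2}(1-u_{x,j})\right)=x-g(u_{x,j}).$$
To prove it I would use the left-continuity of the marginal inverses to write the one-sided limits of $g$ as $g(u_{x,j}-)=g(u_{x,j})+\left(F^{-1+}_{X_2}(1-u_{x,j})-F^{-1}_{X_2}(1-u_{x,j})\right)$ and $g(u_{x,j}+)=g(u_{x,j})+\left(F^{-1+}_{X_1}(u_{x,j})-F^{-1}_{X_1}(u_{x,j})\right)$, and then substitute the defining relation of $\alpha_{x,j}$ behind \eqref{Eq_4_2}, namely $x=(1-\alpha_{x,j})g(u_{x,j}-)+\alpha_{x,j}g(u_{x,j}+)$; the continuous case $g(u_{x,j})=x$ with $\alpha_{x,j}=0$ is recovered trivially since then both sides vanish.

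Combining these pieces finishes the argument. Replacing left inverses by generalized inverses augments the $j$-th $\pi_{X_1}$--$\lambda_{X_2}$ grouping by $(1-u_{x,j})\bigl(g(u_{x,j})-x\bigr)$ and the $j$-th $\pi_{X_2}$--$\lambda_{X_1}$ grouping by $u_{x,j}\bigl(g(u_{x,j})-x\bigr)$. For $j=1$ these reproduce the explicit terms of \eqref{Eq_4_4x}, while for $j\ge 2$ the alternating signs $(-1)^j$ carried by $\mathcal{S}_{x,N_x}$ collect the per-term corrections into $\mp\mathcal{J}_{x,N_x}$ as defined in \eqref{Eq_4_5x} (minus in $s_1$, plus in $s_2$), matching the sign attached to $\mathcal{J}_{x,N_x}$ in Theorem \ref{Theorem-4-2}. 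Hence the generalized-inverse form \eqref{Eq_4_4}--\eqref{Eq_4_5} and the left-inverse form \eqref{Eq_4_4x}--\eqref{Eq_4_5x} of both $s_1(x)$ and $s_2(x)$ agree, which proves the theorem. The main obstacle is the bookkeeping in this last step: tracking the direction of the jump of $F^{-1}_{X_2}$ under the reflected argument $1-u$, and checking that the $(-1)^j$ weights align the corrections with the explicit terms and with $\mathcal{J}_{x,N_x}$ rather than leaving a residual.
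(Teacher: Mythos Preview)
Your proposal is correct and follows essentially the same route as the paper's own proof: both reduce Theorem~\ref{Theorem-4-1} to the already-proved Theorem~\ref{Theorem-4-2} by computing, for each $u_{x,j}$, the correction incurred when the left inverses in $\pi_{X_1}$ and $\lambda_{X_2}$ are replaced by generalized inverses, and then verifying that these corrections reproduce exactly the jump terms $(1-u_{x,j})\bigl(g(u_{x,j})-x\bigr)$ (and $u_{x,1}\bigl(g(u_{x,1})-x\bigr)$ for the first grouping of $s_2$). The only cosmetic difference is that you expand the corrections in terms of $F^{-1+}_{X_i}-F^{-1}_{X_i}$ and invoke the ``crux identity'', whereas the paper collapses them directly via $F^{-1(\alpha_{x,j})}_{X_1}(u_{x,j})+F^{-1(1-\alpha_{x,j})}_{X_2}(1-u_{x,j})=x$ and $g(u_{x,j})=F^{-1}_{X_1}(u_{x,j})+F^{-1}_{X_2}(1-u_{x,j})$; these are the same computation written two ways.
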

\begin{proof}
	The first part of the proof shows that Expressions \eqref{Eq_4_4x} and \eqref{Eq_4_4} of $s_1(x)$ are equal.

	For $j=1,...,N_x$, let $\alpha_{x,j}\in [0,1]$ be such that:
	\begin{equation}\label{Cor-2-eq-9}
		F_{X_1}^{-1(\alpha_{x,j})}\left(u_{x,j} \right)+F_{X_2}^{-1(1-\alpha_{x,j})}\left(1-u_{x,j} \right)=x,
	\end{equation}
	for $x\in(x^{\min},x^{\max})$, where $\alpha_{x,j}$ is known to exist from Lemma \ref{Lemma-4-1}. The stop-loss premium of $X_1$ with retention $F_{X_1}^{-1(\alpha_{x,j})}\left( u_{x,j}\right)$ can be expressed as follows:
	\begin{equation}\label{Cor-2-eq-4}
		\pi_{X_1}\left(F_{X_1}^{-1(\alpha_{x,j})}\left( u_{x,j}\right)\right)=\int_{F_{X_1}^{-1(\alpha_{x,j})}\left( u_{x,j}\right)}^{+\infty} \left(1-F_{X_1}(y)\right)\text{d}y.
	\end{equation}
	For any $\alpha_{x,j}\in [0,1]$, the following equality holds: 
	\begin{equation}\label{Cor-2-eq-5}
		F_{X_1}\left(F_{X_1}^{-1(\alpha_{x,j})}\left( u_{x,j}\right)\right)=F_{X_1}\left(F_{X_1}^{-1}\left( u_{x,j}\right)\right)=u_{x,j}. 
	\end{equation}
	Combining \eqref{Cor-2-eq-4} and \eqref{Cor-2-eq-5} allows to link the stop-loss premiums of $X_1$ with retentions $F_{X_1}^{-1(\alpha_{x,j})}\left( u_{x,j}\right)$ and $F_{X_1}^{-1}\left( u_{x,j}\right)$ as follows:
	\begin{equation}\label{Cor-2-eq-2}
		\pi_{X_1}\left(F_{X_1}^{-1(\alpha_{x,j})}\left( u_{x,j}\right) \right)  =   \pi_{X_1}\left(F_{X_1}^{-1}\left( u_{x,j}\right) \right) - \left(F_{X_1}^{-1(\alpha_x)}\left( u_{x,j}\right) -F_{X_1}^{-1}\left(u_{x,j})\right)  \right)\left( 1-u_{x,j}\right),
	\end{equation}
	Similarly for the lower-tail transform of $X_2$ with retention $F_{X_2}^{-1(1-\alpha_{x,j})}\left( 1-u_{x,j}\right)$:
	\begin{equation}\label{Cor-2-eq-6}
		\lambda_{X_2}\left(F_{X_2}^{-1(1-\alpha_{x,j})}\left(1- u_{x,j}\right)\right)=\int^{F_{X_2}^{-1(1-\alpha_{x,j})}\left( 1- u_{x,j}\right)}_{-\infty}F_{X_2}(y)\text{d}y,
	\end{equation}
	and since the following equality holds:
	\begin{equation}\label{Cor-2-eq-7}
		F_{X_2}\left(F_{X_2}^{-1(1-\alpha_{x,j})}\left( 1-u_{x,j}\right)\right)=F_{X_2}\left(F_{X_2}^{-1}\left( 1-u_{x,j}\right)\right)=1-u_{x,j}, 
	\end{equation}
	for any $\alpha_{x,j}\in [0,1]$, the lower-tail transforms of $X_2$ with retentions $F_{X_2}^{-1(1-\alpha_{x,j})}\left( 1-u_{x,j}\right)$ and $F_{X_2}^{-1}\left(1- u_{x,j}\right)$ can be linked as follows:
	\begin{equation}\label{Cor-2-eq-8}
		\begin{array}{rcl} \lambda_{X_2}\left(F_{X_2}^{-1(1-\alpha_{x,j})}\left(1- u_{x,j}\right) \right)  &=&   \lambda_{X_2}\left(F_{X_2}^{-1}\left(1- u_{x,j}\right) \right)\\ &&+ \left(F_{X_2}^{-1(1-\alpha_{x,j})}\left(1- u_{x,j}\right) -F_{X_2}^{-1}\left( 1-u_{x,j}\right)  \right)\left( 1-u_{x,j}\right).\end{array}
	\end{equation}
	For $j=1,...,N_x$, consider the following difference:
	$$A_j=  \pi_{X_1}\left(F_{X_1}^{-1(\alpha_{x,j})}\left( u_{x,j}\right) \right) -\lambda_{X_2}\left(F_{X_2}^{-1(1-\alpha_{x,j})}\left(1- u_{x,j}\right) \right).$$
	Combining Expressions \eqref{Cor-2-eq-2} and \eqref{Cor-2-eq-8} leads to:
	\begin{eqnarray}
		A_j & = &   \pi_{X_1}\left(F_{X_1}^{-1}\left( u_{x,j}\right) \right) -  \lambda_{X_2}\left(F_{X_2}^{-1}\left(1- u_{x,j}\right) \right) -  \left(F_{X_1}^{-1(\alpha_{x,j})}\left( u_{x,j}\right) -F_{X_1}^{-1}\left(u_{x,j})\right)  \right)\left( 1-u_{x,j}\right) \nonumber\\
		&  & -  \left(F_{X_2}^{-1(1-\alpha_{x,j})}\left(1- u_{x,j}\right) -F_{X_2}^{-1}\left( 1-u_{x,j}\right)  \right)\left( 1-u_{x,j}\right)\nonumber\\
		& = & \pi_{X_1}\left(F_{X_1}^{-1}\left( u_{x,j}\right) \right) -  \lambda_{X_2}\left(F_{X_2}^{-1}\left(1- u_{x,j}\right) \right)+ \nonumber\\
		& & \left( 1-u_{x,j}\right)\left[\left( F_{X_1}^{-1}(u_{x,j})+F_{X_2}^{-1}\left( 1-u_{x,j}\right) \right) - \left(  F_{X_1}^{-1(\alpha_{x,j})}\left(u_{x,j}\right)+F_{X_2}^{-1(1-\alpha_{x,j})}\left( 1-u_{x,j}\right)\right)\right]\nonumber\\
		& =&  \pi_{X_1}\left(F_{X_1}^{-1}\left( u_{x,j}\right) \right) -  \lambda_{X_2}\left(F_{X_2}^{-1}\left(1- u_{x,j}\right) \right) + \left( 1-u_{x,j}\right)\left( g(u_{x,j}) -x \right), \label{Cor-2-eq-11}
	\end{eqnarray} 
	where both the definition of the function $g$ and the relation \eqref{Cor-2-eq-9} were used in the last step. Therefore, \eqref{Cor-2-eq-11} proves that the expressions of $s_1(x)$ given in  \eqref{Eq_4_4x} and \eqref{Eq_4_4} are equal. 
	
	It remains to prove that Expressions \eqref{Eq_4_4x} and \eqref{Eq_4_4} of $s_2(x)$ are equal. 
	Define:
	$$B=  \pi_{X_2}\left(F_{X_2}^{-1(\alpha_{x,1})}\left( u_{x,1}\right) \right) -\lambda_{X_2}\left(F_{X_2}^{-1(1-\alpha_{x,1})}\left(1- u_{x,1}\right) \right).$$ 
	Using the same arguments as above, it follows that:
	\begin{equation}\label{Cor-2-eq-14}
		B=  \pi_{X_2}\left(F_{X_2}^{-1}\left( u_{x,1}\right) \right) -\lambda_{X_2}\left(F_{X_2}^{-1}\left(1- u_{x,1}\right) \right)+u_{x,1}\left(g(u_{x,1})-x \right),
	\end{equation}
	which can be combined with \eqref{Cor-2-eq-11} for $j=2,...,N_x$ to prove that \eqref{Eq_4_4x} and \eqref{Eq_4_4} are two equivalent expressions for $s_2(x)$ as well, which ends the proof.
\end{proof}

In contrast with the expression in Theorem \ref{Theorem-4-2}, the alternative expression in Theorem \ref{Theorem-4-1} does not have the terms $\mathcal{J}_{x,N_x}$ nor $(1-u_{x,1})\left(g(u_{x,1})-x\right)$ and $u_{x,1}\left(g(u_{x,1})-x\right)$. In counterpart, it requires determining the generalized inverses of $X_1$ and $X_2$.

It can be proven that $s_1(x)+s_2(x)=\mathbb{E}\left[S^l\right]-x$. Combining this equality with the parity \eqref{Eq_2_6}, the decomposition of the lower bound $\pi_{S^l}(x)$ can be further simplified. In particular, if $s_1(x)$ is the decomposition of the upper tail transform $\pi_{S^l}(x)$, then $-s_2(x)$ is the decomposition of the corresponding lower tail transform $\lambda_{S^l}(x)$, which means that only one of the two quantities is positive. Hence, as stated in the lemma below, $\pi_{S^l}(x)$ is always the highest of $s_1(x)$ and $s_2(x)$. This simplification applies to the TVaR as well.
\begin{lemma}\label{Corollary-4-1}
	For any $p\in(0,1)$, any $\alpha \in [0,1]$ and any $x\in\left(x^{\min},x^{\max}\right)$, the \emph{Tail Value-at-Risk} at the level $p$ and the upper tail transform at the level $x$ of $S^l$ can be expressed as follows:
	$$\emph{TVaR}_p\left[S^l\right] = \frac{1}{1-p}\max\left\{t_1^\alpha(p),t_2^\alpha(p)\right\},\qquad \text{and}\qquad \pi_{S^l}(x)=\max\{s_1(x),s_2(x)\},$$
	where $t_1^\alpha(p)$ and $t_2^\alpha(p)$ are given in \eqref{Eq_5_2}, and $s_1(x)$ and $s_2(x)$ are given in \eqref{Eq_4_4x}.
\end{lemma}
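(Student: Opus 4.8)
The plan is to combine the additive identity $s_1(x)+s_2(x)=\mathbb{E}[S^l]-x$ announced just before the statement with the put-call parity \eqref{Eq_2_6} applied to $S^l$ itself. Theorem \ref{Theorem-4-2} already tells us that $\pi_{S^l}(x)$ equals $s_1(x)$ when $g(u)\leq x$ on $(0,u_{x,1})$ and equals $s_2(x)$ when $g(u)\geq x$ on $(0,u_{x,1})$, so it suffices to show that whichever of $s_1(x),s_2(x)$ is \emph{not} equal to $\pi_{S^l}(x)$ is nonpositive; the $\max$ then selects $\pi_{S^l}(x)$ automatically, and the TVaR statement will follow from the affine relation \eqref{A16} between the $t_i^\alpha(p)$ and the $s_i(x)$.

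First I would establish $s_1(x)+s_2(x)=\mathbb{E}[S^l]-x$ by summing the two expressions in \eqref{Eq_4_4x}. The sum terms $\mathcal{S}_{x,N_x}$ enter $s_1(x)$ and $s_2(x)$ with opposite signs and cancel, and likewise for $\mathcal{J}_{x,N_x}$; the boundary pieces combine as $(1-u_{x,1})(g(u_{x,1})-x)+u_{x,1}(g(u_{x,1})-x)=g(u_{x,1})-x$. Applying put-call parity \eqref{Eq_2_6} to each marginal gives $\pi_{X_1}(F^{-1}_{X_1}(u_{x,1}))-\lambda_{X_1}(F^{-1}_{X_1}(u_{x,1}))=\mathbb{E}[X_1]-F^{-1}_{X_1}(u_{x,1})$ and the analogue for $X_2$ at the level $1-u_{x,1}$. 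Substituting $g(u_{x,1})=F^{-1}_{X_1}(u_{x,1})+F^{-1}_{X_2}(1-u_{x,1})$ from \eqref{Eq_3_2}, the two inverse terms cancel against $g(u_{x,1})$, leaving $s_1(x)+s_2(x)=\mathbb{E}[X_1]+\mathbb{E}[X_2]-x=\mathbb{E}[S^l]-x$, where the last step uses that the counter-monotonic modification preserves the marginals.

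Next I would feed this identity into put-call parity for $S^l$, namely $\pi_{S^l}(x)-\lambda_{S^l}(x)=\mathbb{E}[S^l]-x$, to obtain $s_1(x)+s_2(x)=\pi_{S^l}(x)-\lambda_{S^l}(x)$. In the case where Theorem \ref{Theorem-4-2} gives $\pi_{S^l}(x)=s_1(x)$, this forces $s_2(x)=-\lambda_{S^l}(x)$; since $\lambda_{S^l}(x)=\mathbb{E}[(x-S^l)_+]\geq 0$ and $\pi_{S^l}(x)=\mathbb{E}[(S^l-x)_+]\geq 0$, I get $s_2(x)\leq 0\leq s_1(x)$, hence $\pi_{S^l}(x)=s_1(x)=\max\{s_1(x),s_2(x)\}$. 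The symmetric argument covers the case $\pi_{S^l}(x)=s_2(x)$, where $s_1(x)=-\lambda_{S^l}(x)\leq 0\leq s_2(x)$. This proves the second identity of the lemma.

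Finally, the TVaR statement follows with no new work from \eqref{A16}, namely $t_1^\alpha(p)=s_1(x)+(1-p)x$ and $t_2^\alpha(p)=s_2(x)+(1-p)x$ with $x=F^{-1(\alpha)}_{S^l}(p)$. Because $t_1^\alpha(p)$ and $t_2^\alpha(p)$ differ from $s_1(x)$ and $s_2(x)$ by the common additive constant $(1-p)x$, we have $\max\{t_1^\alpha(p),t_2^\alpha(p)\}=\max\{s_1(x),s_2(x)\}+(1-p)x=\pi_{S^l}(x)+(1-p)x$, and \eqref{Eq_2_7} then yields $\text{TVaR}_p[S^l]=\frac{1}{1-p}\max\{t_1^\alpha(p),t_2^\alpha(p)\}$. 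The only genuinely delicate point is the algebraic verification of the additive identity in the first step, in particular confirming that $\mathcal{S}_{x,N_x}$ and $\mathcal{J}_{x,N_x}$ cancel exactly and that the inverse terms cancel against $g(u_{x,1})$; once this bookkeeping is settled, the sign argument driven by the nonnegativity of $\pi_{S^l}$ and $\lambda_{S^l}$ is immediate.
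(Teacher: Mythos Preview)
Your proof is correct and follows essentially the same route as the paper: both establish the additive identity $s_1(x)+s_2(x)=\mathbb{E}[S^l]-x$, combine it with put-call parity and the nonnegativity of $\pi_{S^l}$ and $\lambda_{S^l}$ to deduce the $\max$ formula for the stop-loss premium, and then lift the result to the TVaR via the affine relation \eqref{A16}. The only difference is that you spell out the algebraic cancellations underlying the additive identity, whereas the paper simply asserts it as a consequence of \eqref{Eq_2_6}.
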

\begin{proof}
	First, note that using the parity \eqref{Eq_2_6}, it follows that: $$s_1(x)+s_2(x)=\mathbb{E}\left[S^l\right]-x.$$
	Suppose that $\pi_{S^l}(x) =s_1(x) $. Then, from \eqref{Eq_2_6}, $\lambda_{S^l}(x)=-s_2(x)$. Since the lower tail transform is always non-negative, it follows that $s_2(x)\leq0$, and hence, the non-negativeness of the upper tail transform leads to $s_2(x)\leq s_1(x)$. The same reasoning applies when $\pi_{S^l}(x)=s_2(x)$, in which case $s_2(x)\geq s_1(x)$. This concludes the proof for the stop-loss premium.
	
	For $p \in (0,1)$ and $x_p^\alpha=F^{-1}_{S^l}(p)$, it follows that $\pi_{S^l}\left(x_p^\alpha\right) = \max\left\{s_1\left(x_p^\alpha\right),s_2\left(x_p^\alpha\right)\right\}.$ Combining this equality with \eqref{A16} proves that the TVaR of $S^l$ is also the maximum of $\frac{1}{1-p}t_1^\alpha(p)$ and $\frac{1}{1-p}t_2^\alpha(p)$, and hence ends the proof.
\end{proof}
\subsection{The case where $N_x=1$}
For $x\in \left(x^{\min}, x^{\max} \right)$, suppose that the marginal cdf's $F_{X_1}$ and $F_{X_2}$ are such that $N_x=1$, i.e.\ the set $E_x$ contains a single element $u_{x,1}$. The following corollary shows that the decomposition of the stop-loss premium in case $N_x=1$ can be expressed in terms of $F_{S^l}(x)$. Thus, the case $N_x=1$ leads to decompositions which are similar to the decomposition of the upper tail transform of the comonotonic sum in \eqref{Eq_2_11}. Recall that for $N_x=1$, the case $g(u)\leq x$ for $u\in (0,u_{x,1})$ is equivalent with $g(0)\leq g(1)$, whereas the case $g(u)\geq x$ for $u\in (0,u_{x,1})$ is equivalent with $g(0)\geq g(1)$.
\begin{corollary}\label{Corollary_4}
	For any $x\in\left(x^{\min},x^{\max}\right)$, if $N_x=1$, the \emph{upper tail transform} at the level $x$ of $S^l$ can be expressed as follows:
$$	\pi_{S^l}(x)=\left\{
\begin{array}{c}
	\pi_{X_1}\left(F^{-1\left(\alpha_{x}\right)}_{X_1}\left(F_{S^l}(x)\right)\right)
	-\lambda_{X_2}\left(F^{-1\left(1-\alpha_{x}\right)}_{X_2}\left(1-F_{S^l}(x)\right)\right), \text{\quad if\ \ \ }g(0)\leq g(1),  \\
	\pi_{X_2}\left(F^{-1\left(1-\alpha_{x}\right)}_{X_2}\left(F_{S^l}(x)\right)\right) - \lambda_{X_1}\left(F^{-1\left(\alpha_{x}\right)}_{X_1}\left(1-F_{S^l}(x)\right)\right), \text{\quad if\ \ \ }g(0)\geq g(1),
\end{array}\right.$$
where $\alpha_x$ is determined from:
$$	x=\left\{
\begin{array}{c}
	F^{-1\left(\alpha_{x}\right)}_{X_1}\left(F_{S^l}(x)\right) +F^{-1\left(1-\alpha_{x}\right)}_{X_2}\left(1-F_{S^l}(x)\right), \text{\quad if\ \ \ }g(0)\leq g(1),  \\
	F^{-1\left(1-\alpha_{x}\right)}_{X_2}\left(F_{S^l}(x)\right) +F^{-1\left(\alpha_{x}\right)}_{X_1}\left(1-F_{S^l}(x)\right), \text{\quad if\ \ \ } g(0)\geq g(1).
\end{array}\right.$$
\end{corollary}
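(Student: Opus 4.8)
The plan is to specialize Theorem \ref{Theorem-4-1} to the case $N_x=1$ and then identify the unique element $u_{x,1}$ of the set $E_x$ with $F_{S^l}(x)$. I start from Theorem \ref{Theorem-4-1} rather than Theorem \ref{Theorem-4-2} because its decomposition \eqref{Eq_4_4}--\eqref{Eq_4_5} is already phrased in terms of the generalized inverses $F^{-1(\alpha_{x,j})}_{X_1}$ and $F^{-1(1-\alpha_{x,j})}_{X_2}$ that appear in the statement of the corollary. When $N_x=1$, the sum $\mathcal{S}_{x,N_x}$ in \eqref{Eq_4_5} is empty, so $\mathcal{S}_{x,1}=0$ by the convention $\sum_{j=2}^{1}=0$. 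Consequently \eqref{Eq_4_4} collapses to $s_1(x)=\pi_{X_1}\!\left(F^{-1(\alpha_{x,1})}_{X_1}(u_{x,1})\right)-\lambda_{X_2}\!\left(F^{-1(1-\alpha_{x,1})}_{X_2}(1-u_{x,1})\right)$ in the case $g(u)\leq x$ on $(0,u_{x,1})$, and to the mirror expression $s_2(x)$ in the case $g(u)\geq x$ on $(0,u_{x,1})$. Recalling that, for $N_x=1$, these two cases are equivalent to $g(0)\leq g(1)$ and $g(0)\geq g(1)$ respectively, the two formulas already have the required shape; what remains is to rewrite the single argument $u_{x,1}$ in terms of $F_{S^l}(x)$.

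The key step is to establish $u_{x,1}=F_{S^l}(x)$ when $g(0)\leq g(1)$ and $u_{x,1}=1-F_{S^l}(x)$ when $g(0)\geq g(1)$. I would use $S^l\overset{d}{=}g(U)$ with $U$ uniform on $(0,1)$, so that $F_{S^l}(x)=\mathbb{P}[g(U)\leq x]$ equals the Lebesgue measure of the sublevel set $\{u\in(0,1):g(u)\leq x\}$. Since $N_x=1$ and, in the case $g(0)\leq g(1)$, the function crosses $x$ exactly once from below at $u_{x,1}$, this sublevel set coincides, up to a null set, with the interval $(0,u_{x,1}]$; hence its measure is $u_{x,1}$ and $F_{S^l}(x)=u_{x,1}$. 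The symmetric argument for a single crossing from above gives $\{g\leq x\}=[u_{x,1},1)$ up to a null set, whence $F_{S^l}(x)=1-u_{x,1}$. This is the $N=1$ specialization (with $\mathcal{D}=0$) of the probabilistic content behind Lemma \ref{Lemma-6-1}, but derived directly so as not to require the strict-monotonicity hypothesis of that lemma.

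Substituting $u_{x,1}=F_{S^l}(x)$ (respectively $u_{x,1}=1-F_{S^l}(x)$, so that $1-u_{x,1}=F_{S^l}(x)$) into the reduced expression for $s_1(x)$ (respectively $s_2(x)$), and into the defining relation \eqref{Eq_4_6} for $\alpha_{x,1}=:\alpha_x$, then yields exactly the two formulas of the corollary together with the two displayed equations that pin down $\alpha_x$; the second case requires only reordering the two summands in \eqref{Eq_4_6}.

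The main obstacle is the identification $u_{x,1}=F_{S^l}(x)$, because one must verify that the sublevel-set measure equals $u_{x,1}$ in each of the configurations permitted by Definition \ref{DefSetEk}: a continuous crossing, where $g(u_{x,1})=x$; a jump crossing, where $g(u_{x,1}-)=g(u_{x,1})<x<g(u_{x,1}+)$ and $g(u_{x,1})\neq x$; and a crossing preceded by a flat segment at level $x$, where $u_{x,1}$ is by definition the last point of that segment and $S^l$ carries an atom at $x$. In each configuration the sublevel set reduces to $(0,u_{x,1}]$ up to a null set, so the identity holds uniformly and the remaining manipulations are a direct substitution.
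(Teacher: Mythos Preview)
Your strategy---specialize Theorem \ref{Theorem-4-1} to $N_x=1$ and then identify $u_{x,1}$ with $F_{S^l}(x)$ (respectively $1-F_{S^l}(x)$)---is exactly the paper's route, and your treatment of the first case $g(0)\leq g(1)$ is correct. The gap is in the second case $g(0)\geq g(1)$, specifically in your appeal to ``the symmetric argument''.

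The definition of $E_x$ is \emph{not} symmetric in the direction of the crossing: in both Conditions 1 and 2 of Definition \ref{DefSetEk}, the possible flat segment $\{g=x\}$ lies on the interval $(b_1,u_{x,1})$, i.e.\ to the \emph{left} of $u_{x,1}$. In the upward-crossing case this flat segment is already contained in $(0,u_{x,1}]$, so your sublevel-set identity $\{g\le x\}=(0,u_{x,1}]$ is fine. But in the downward-crossing case the same flat segment, having $g=x$, belongs to $\{g\le x\}$ yet lies \emph{outside} $[u_{x,1},1)$; it has positive Lebesgue measure, so it is not a null set and your claimed identity $\{g\le x\}=[u_{x,1},1)$ fails. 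In that situation $F_{S^l}(x)=1-u^\star$ with $u^\star<u_{x,1}$ the left endpoint of the flat segment, and your substitution $1-u_{x,1}=F_{S^l}(x)$ is false.

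The paper deals with precisely this configuration by abandoning the direct substitution into Theorem \ref{Theorem-4-1} and instead writing $\pi_{S^l}(x)=\int_0^{u^\star}(g(u)-x)\,\mathrm{d}u$, then invoking Lemma \ref{Lemma-4-1} at the point $u^\star$ (not $u_{x,1}$) to obtain the $\alpha_x$ in the statement. To complete your argument you would need to either reproduce this step or show separately that the formula from Theorem \ref{Theorem-4-1} evaluated at $u_{x,1}$ coincides with the corollary's formula evaluated at $1-F_{S^l}(x)=u^\star$; the latter is not automatic.
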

\begin{proof}
	For $N_x=1$, the set $E_x$ contains a single element $u_{x,1}$ and $\mathcal{S}_{x,1}=0$, and Theorem \ref{Theorem-4-1} leads to a simplified decomposition. The remaining task is to prove that the retentions of the upper and lower tail transforms can be expressed in terms of the cdf of $S^l$. 
	
	Note that $S^l\overset{d}{=}g(U)$ leads to $F_{S^l}(x)   =  \mathbb{P}\left[g(U)\leq x \right]$, and hence, in the first case where $g(u)\leq x$ for $u< u_{x,1}$, it follows that $F_{S^l}(x)= u_{x,1}$.
	
	In the second case where  $g(u)\geq x$ for $u< u_{x,1}$, unlike in the first case, the inequality $F_{S^l}(x)\geq \mathbb{P}\left[g(U)< x\right]=1-u_{x,1}$ holds, and becomes an equality if $g$ is strictly decreasing in $u_{x,1}$, in which case $F_{S^l}(x)   =  1-u_{x,1}$. Otherwise, from the definition of the set $E_x$, the case where the function $g$ is flat before $u_{x,1}$ leads to an inequality, and there exists $u^{\star}<u_{x,1}$ such that $g(u)=x$ for $u\in \left(u^{\star}, u_{x,1} \right)$. Thus, $F_{S^l}(x)=1-u^{\star}$, and the upper tail transform of $S^l$ can be written as follows:
	\begin{equation}\label{Cor4}\mathbb{E}\left[\left(S^l-x\right)_+\right]=\int_0^{u^{\star}} \left(g(u)-x \right)\text{d}u. \end{equation}
	Using Lemma \ref{Lemma-4-1}, there exists $\alpha_x\in [0,1]$ such that:
	\begin{equation}\label{Cor5}x=F^{-1(\alpha_x)}_{X_1}\left(u^{\star}\right) + F^{-1(1-\alpha_x)}_{X_2}\left(1-u^{\star}\right)=F^{-1(\alpha_x)}_{X_1}\left(1-F_{S^l}(x)\right) + F^{-1(1-\alpha_x)}_{X_2}\left(F_{S^l}(x)\right).\end{equation}
	Combining \eqref{Cor4} and \eqref{Cor5} and rearranging ends the proof.
\end{proof}

Note that unlike for the TVaR, the dependence uncertainty spread of $\pi_{S^l}(x)$ in case $N_x=1$ is not determined by one of the two random variables as in \eqref{Eq_5_4}.

\subsection{Illustration}
The importance of each term in $\mathcal{S}_{x,N_x}$ is similar to that of the terms in $\mathcal{T}_{p,N_p^\alpha}^\alpha$, which means that the discussion from the illustration of Section \ref{Sec:TVaR} remains valid. This numerical illustration focuses on the importance of the terms due to the jumps of the function $g$. Only the second example where $X_2$ follows a Poisson distribution is discussed, because there are no jumps in the other example. In order to illustrate the term due to the jumps, the decomposition from Theorem \ref{Theorem-4-2} is used. For $x=F^{-1}_{S^l}(0.5)$, the bottom panel of Figure \ref{Figure1} shows that $g(u)\geq x$ for $u\in(0,u_{x,1})$. Thus, the decomposition of the stop-loss premium is given by $s_2(x)$, and the component under interest is $u_{x,1}\left(g\left(u_{x,1}\right) -x\right) + \mathcal{J}_{x,N_x}$. 

Using the values of the elements of $E_x$, it follows from Theorem \ref{Theorem-4-2} that the stop-loss premium at the level $x=F^{-1}_{S^l}(0.5)$ of $S^l$ is equal to $0.33610$. The first term gives $\pi_{X_2}\left(F^{-1}_{X_2}\left(1-u_{x,1}\right)\right) - \lambda_{X_1}\left(F^{-1}_{X_1}\left(u_{x,1}\right)\right)=0.03918$, whereas $\mathcal{S}_{x,12}=0.92823$. The remaining part due to jumps gives $u_{x,1}\left(g\left(u_{x,1}\right) -x\right) + \mathcal{J}_{x,12}=-0.63130$. This shows the importance of the last term, as ignoring it would considerably overestimate the value of the stop-loss premium.

\section{Conclusion}
\label{Sec:Conclusion}
This paper studies the Value-at-Risk, the Tail Value-at-Risk, and the upper tail transform of the sum of two counter-monotonic random variables $S^l\overset{d}{=}F^{-1}_{X_1}(U)+F^{-1}_{X_2}(1-U)$. Decomposition formulas for these risk measures are derived in terms of the corresponding risk measures of the marginal random variables. An important step in the derivation is to study the behavior of the function $g:u\mapsto F^{-1}_{X_1}(u)+F^{-1}_{X_2}(1-u)$. This is performed by introducing the set $E_x$ which allows to identify the crossing points of $g$ with respect to a level $x$.

The derivations of this paper do not require strong restrictions on the random variables $X_1$ and $X_2$. In particular, these random variables can be continuous, discrete, or a combination of the two. The results are also valid for random variables which are defined on the entire real support, and are not limited to positive-valued random variables. 

The contributions of this paper are relevant to different areas of finance and actuarial science. One of the applications is asset and liability management where differences of random variables are involved rather than sums. In this case, the counter-monotonic sum becomes a comonotonic difference. The study of basis risk is also an area of application where the relevant quantity is a difference of two random variables. 

\textbf{A compelling question is whether more general risk measures of counter-monotonic sums, such as distorted expectations, could be decomposed into their corresponding marginal components. \cite{Cheung2015} provide an expression of distorted expectations in function of the TVaR under some conditions of the distortion function and the random variable. This formula could constitute a starting point of the derivation, but some complications could arise when the random variables $X_1$ and $X_2$ do not satisfy continuity conditions. Such an endeavor deserves a separate contribution, and is left for future research.
}
\bibliographystyle{agsm}
\bibliography{References}

@article{Cheung2015,
	Author = {Cheung, Ka Chun and Dhaene, Jan and Kukush, Alexander and Linders, Daniel},
	Journal = {Scandinavian Actuarial Journal},
	Title = {Ordered random vectors and equality in distribution},
	volume = {2015},
	pages={221--244},
	Year = {2015}}

@article{Zhang2017,
	Author = {Zhang, Jingong and Tan, Ken Seng and Weng, Chengguo},
	Journal = {Insurance: Mathematics and Economics},
	Title = {Optimal hedging with basis risk under mean–variance criterion},
	volume = {75},
	pages={1--15},
	Year = {2017}}

@article{CheungLo2013,
	Author = {Cheung, Ka Chun and Lo, Ambrose},
	Journal = {Insurance: Mathematics and Economics},
	Title = {General lower bounds on convex functionals of aggregate sums},
	volume = {53},
	pages={884--896},
	Year = {2013}}

@article{WangWang2015,
	Author = {Wang, Bin and Wang, Ruodu},
	Journal = {Journal of Multivariate Analysis},
	Title = {Extreme negative dependence and risk aggregation},
	volume = {136},
	pages={12--25},
	Year = {2015}}

@article{GaffkeRuschendorf,
	Author = {Gaffke, N. and R\"{u}schendorf, L},
	Journal = {Mathematische Operationsforscung und Statistik. Series Optimization},
	Title = {On a class of extremal problems in statistics},
	volume = {12},
	pages={123--135},
	Year = {1981}}

@article{WangPengYang2013,
	Author = {Wang, Ruodu and Peng, Liang and Yang, Jingping},
	Journal = {Finance Stochastics},
	Title = {Bounds for the sum of dependent risks and worst value-at-risk with monotone marginal densities},
	volume = {17},
	pages={395--417},
	Year = {2013}
	}

@article{BahlSabanis,
	Author = {Bahl, Raj Kumari and Sabanis, Sotirios},
	Journal = {Insurance: Mathematics and Economics},
	Title = {Model-independent price bounds for catastrophic mortality bonds},
	volume = {96},
	pages={276--291},
	Year = {2021}}

@article{Cheung2017,
	Author = {Cheung, Ka Chun and Denuit, Michel and Dhaene, Jan},
	Journal = {Scandinavian Actuarial Journal},
	Title = {Tail mutual exclusivity and tail-{V}a{R} lower bounds},
	volume = {2017},
	pages={88--104},
	Year = {2017}}

@article{Embrechtal2015,
	Author = {Embrechts, Paul and Wang, Bin and Wang, Ruodu},
	Journal = {Finance and Stochastics},
	Title = {Aggregation-robustness and model uncertainty of regulatory risk measures},
	volume = {19},
	pages={763--790},
	Year = {2015}}

@article{Bernardetal2014,
	Author = {Bernard, C. and Jiang, X. and Wang, R.},
	Journal = {Insurance: Mathematics and Economics},
	Title = {Risk aggregation with dependence uncertainty},
	volume = {54},
	pages={93--108},
	Year = {2014}}

@article{Puccetti2013,
	Author = {Puccetti, Giovanni},
	Journal = {Statistics and Probability Letters},
	Title = {Sharp bounds on the expected shortfall for a sum of dependent random variables},
	volume = {83},
	pages={1227--1232},
	Year = {2013}}

@article{HanbaliLinders2019,
	Author = {Hanbali, Hamza and Linders, Dani\"{e}l},
	Journal = {Quantitative Finance},
	Title = {American-type basket option pricing: a simple two-dimensional partial differential equation},
	volume = {19},
	pages={1689--1704},
	Year = {2019}}

@article{PuccettiWang,
	Author = {Puccetti, Giovanni and Wang, Ruodu},
	Journal = {Statistical Science},
	Title = {Extremal dependence concepts},
	volume = {30},
	pages={485--517},
	Year = {2015}}

@article{WangWang2016,
	Author = {Wang, B and Wang, Ruodu},
	Journal = {Mathematics of Operations Research},
	Title = {Joint mixability},
	volume = {41},
	pages={808--826},
	Year = {2016}}

@book{McNeil,
	author = {McNeil, A. J. and Frey, R. and Embrechts, P.},
	title = {Quantitative risk management: concepts, techniques and tools},
	publisher = {Princeton University Press},
	year = {2015}}

@article{BernardJBF,
	Author = {Bernard, Carole and Vanduffel, Steven},
	Journal = {Journal of Banking and Finance},
	Title = {A new approach to assessing model risk in high dimensions},
	volume = {58},
	pages={166--178},
	Year = {2015}}

@article{BernardJRI,
	Author = {Bernard, Carole and R\"{u}schendorf, Ludger and Vanduffel, Steven},
	Journal = {Journal of Risk and Insurance},
	Title = {Value-at-risk bounds with variance constraints},
	volume = {84},
	pages={923--959},
	Year = {2017}}

@article{Luxa,
	Author = {Luxa, Thibaut and Papapantoleon, Antonis},
	Journal = {Insurance: Mathematics and Economics},
	Title = {Model-free bounds on value-at-risk using extreme value information and statistical distances},
	volume = {86},
	pages={73--83},
	Year = {2019}}

@article{Mesfioui,
	Author = {Mesfioui, Mhamed and Quessy, Jean-Francois},
	Journal = {Insurance: Mathematics and Economics},
	Title = {Bounds on the value-at-risk for the sum of possibly dependent risks},
	volume = {37},
	pages={135--151},
	Year = {2005}}

@article{Embrechts2013b,
	Author = {Embrechts, P. and Puccetti, G. and R\"{u}schendorf, L.},
	Journal = {Journal of Banking and Finance},
	Title = {Model uncertainty and {V}a{R} aggregation},
	volume = {37},
	pages={2750--2764},
	Year = {2013}}

@article{Chaoubi,
	Author = {Chaoubi, Ihsan and Cossette, H\'{e}l\`{e}ne and Gadoury, Simon-Pierre and Marceau, Etienne},
	Journal = {Insurance: Mathematics and Economics},
	Title = {On sums of two counter-monotonic risks},
	volume = {92},
	pages={47--60},
	Year = {2020}}

@book{Wuthrich,
	author = {W\"{u}thrich, Mario V},
	title = {Financial modeling, actuarial valuation and solvency in insurance},
	publisher = {Springer Finance},
	year = {2020}}

@article{Embrechts2009,
	Author = {Embrechts, P. and Lambrigger, D. D. and W\"{u}thrich, M. V.},
	Journal = {Extremes},
	Title = {Multivariate extremes and the aggregation of dependent risks: Examples and counter-examples},
	volume = {12},
	pages={107--127},
	Year = {2009}}

@article{HanbaliLinders,
	Author = {Hanbali, Hamza and Linders, Dani\"{e}l},
	Journal = {University of Amsterdam Research Report},
	Title = {Monotone tail functions: definitions, properties, and applications},
	Year = {2021}}

@article{Kaas_etal:Upper_Lower_Bounds,
	Author = {Kaas, Rob and Dhaene, Jan and Goovaerts, Marc J.},
	Journal = {Insurance: Mathematics and Economics},
	Pages = {151-168},
	Title = {Upper and lower bounds for sums of random variables},
	Volume = {27},
	Year = {2000}}

@article {Dhaene2000,	
	author = {Dhaene, Jan and Wang, Shaun and Young, Virginia and Goovaerts, Marc J.},	
	title = {Comonotonicity and maximal stop-loss premiums},	
	journal = {Bulletin of the Swiss Association of Actuaries},	
	volume = {2000},	
	pages = {99-113},
	year = {2000},
}

@article{Hobson2005,
	author               = {Hobson, David and Laurence, Peter and Wang, Tai-Ho},
	journal              = {Quantitative Finance},
	pages                = {329--342},
	title                = {Static-arbitrage upper bounds for the prices of basket options},
	volume               = {5},
	year                 = {2005},
}

@article {HuntBlake2015,	
author = {Hunt, Andrew and Blake, David},	
title = {Modelling longevity bonds: {A}nalysing the {S}wiss {R}e {K}ortis bond},	
journal = {Insurance: Mathematics and Economics},	
volume = {63},
pages = {12--29},
year = {2015},
}

@article {ChenMacMinnSun2015,	
author = {Chen, Hua and MacMinn, Richard and Sun, Tao},	
title = {Multi-population mortality models: {A} factor copula approach},	
journal = {Insurance: Mathematics and Economics},	
volume = {63},
pages = {135--146},
year = {2015a},
doi = {10.1016/j.insmatheco.2015.03.022}
}

@article{Dhaene_etal_2006,
	Author = {Dhaene, J and Vanduffel, S and Goovaerts, M and Kaas, R and Tang, Q and Vyncke, D},
	Journal = {Stochastic {M}odels},
	Pages = {573--606},
	Title = {Risk measures and comonotonicity: a review},
	Volume = {22},
	Year = {2006},}

@article{Dhaene_et_al_2005,
	Author = {J. Dhaene and S. Vanduffel and M. J. Goovaerts and R. Kaas and D. Vyncke},
	Journal = {Journal of Risk and Insurance},
	Pages = {253--301},
	Title = {Comonotonic approximations for optimal portfolio selection problems},
	Volume = {72},
	Year = {2005}}

@article {LaurenceWang2009,
author = {Laurence, Peter and Wang, Tai-Ho},	
title = {Sharp distribution free lower bounds for spread options and the corresponding optimal subreplicating portfolios},	
journal = {Insurance: Mathematics and Economics},	
volume = {44},	
pages = {35-47},
year = {2009},
}

@article {LaurenceWang2008,	
author = {Laurence, Peter and Wang, Tai-Ho},	
title = {Distribution free bounds for spread options and market implied antimonotonicity gap},	
journal = {European Journal of Finance},	
volume = {14},	
pages = {717-734},
year = {2008},
}

@article {Coughlaan2011,
author = {Coughlan, Guy D. and Khalaf-Allah, Marwa and Ye, Yijing and Kumar, Sumit and Cairns, Andrew J. and Blake, David and Dowd, Kevin},
title = {Longevity hedging 101},
journal = {North American Actuarial Journal},
volume = {15},
pages = {150--176},
year = {2011},
}

@article {DenuitDhaene2007,	
author = {Denuit, Michel and Dhaene, Jan},	
title = {Comonotonic bounds on the survival probabilities in the {L}ee-{C}arter model for mortality projections},	
journal = {Computational and Applied Mathematics},	
volume = {203},	
pages = {169--176},
year = {2007},
}

@article {Linders2012,	
author = {Linders, Dani\"{e}l and Dhaene, Jan and Hounnon, H and Vanmaele, Mich\`{e}le},	
title = {Index Options: A Model Free Approach},	
journal = {FEB, KU Leuven Research Report},	
volume = {AFI-1265},
year = {2012},
}

@article {CarmonaDurrleman2003,	
author = {Carmona, Ren\'{e} and Durrleman, Valdo},	
title = {Pricing and Hedging Spread Options},	
journal = {SIAM Review},	
volume = {45},
pages = {627--685},
year = {2003},
}

@article {Dhaene2002a,	
author = {Dhaene, Jan and Denuit, Michel and Goovaerts, Marc J. and Kaas, Rob and Vyncke, David},	
title = {The concept of comonotonicity in actuarial science and finance: {T}heory},	
journal = {Insurance: Mathematics and Economics},	
volume = {31},	
pages = {3--33},
year = {2002a},
}

@article {Dhaene2002b,	
author = {Dhaene, Jan and Denuit, Michel and Goovaerts, Marc J. and Kaas, Rob and Vyncke, David},	
title = {The concept of comonotonicity in actuarial science and finance: {A}pplication},	
journal = {Insurance: Mathematics and Economics},	
volume = {31},	
pages = {133--161},
year = {2002b},
}

@article {MeileisonNadas1979,	
author = {Meileison, Isaac and N\'{a}das, Arthur},	
title = {Convex majorization with an application to the length of critical paths},	
journal = {Journal of Applied Probability},	
volume = {16},
pages = {671--677},
year = {1979},
}

@article{Ruschendorf1983,
	author = {R\"uschendorf, Ludger},
	title = {Solution of a statistical optimization problem by rearrangement methods},
	journal = {Metrika},
	volume = {30},
	year = {1983},
	pages = {55-61},
}

@book {DenuitDhaeneGoovaertsKaas2005,	
author = {Denuit, Michel and Dhaene, Jan and Goovaerts, Marc and Kaas, Rob},	
title = {Actuarial Theory for Dependent Risks},	
Publisher = {John Wiley \& Sons},	
year = {2005},
}

@article {DhaeneGoovaerts,	
author = {Dhaene, Jan and Goovaerts, Marc},	
title = {Dependency of risks and stop-loss order},	
journal = {ASTIN Bulletin},	
volume = {26},	
pages = {201--2012},
year = {1996},
}

@article{FengJingDhaene,
author = {Feng, R and Jing, X and Dhaene, J},
title = {Comonotonic approximations of risk measures for variable annuity guaranteed benefits with dynamic policyholder behavior},
journal = {Journal of Computational and Applied Mathematics},
volume = {311},
pages = {272--292},
year = {2015},
}

@article {Embrechts2013a,
	author = {Embrechts, Paul and Hofert, Marius},
	title = {A note on generalized inverses},
	journal = {Mathematical Methods of Operations Research},
	volume = {77},
	pages = {423-432},
	year = {2013},
}

@book {ShakedShanthihumar,	
	author = {Shaked, Moshe and Shanthikumar, Goerge},	
	title = {Stochastic Orders},	
	Publisher = {Springer},	
	year = {2007},
}

@inproceedings{Deelstra2010:overview_Comonotonicity,
	author               = {Griselda Deelstra and Jan Dhaene and Michele Vanmaele},
	booktitle            = {Advanced Mathematical Methods for Finance},
	editor               = {Oksendal, B. and Nunno, G.},
	pages                = {155-179},
	publisher            = {Springer Berlin Heidelberg},
	title                = {An overview of comonotonicity and its applications in finance and insurance},
	year                 = {2011},
}

@article{Dhaene_Denuit_1999,
	Author = {Jan Dhaene and Michel Denuit},
	Journal = {Insurance: Mathematics and Economics},
	Pages = {11 - 21},
	Title = {The safest dependence structure among risks},
	Volume = {25},
	Year = {1999},}

@article{Wang_wANG_2011cm,
	Author = {Bin Wang and Ruodu Wang},
	Journal = {Journal of Multivariate Analysis},
	Pages = {1344 - 1360},
	Title = {The complete mixability and convex minimization problems with monotone marginal densities},
	Volume = {102},
	Year = {2011},}

@article{LeeAhn_2014,
	Author = {Lee, Woojoo and Ahn, Jae Youn},
	Journal = {Insurance: Mathematics and Economics},
	Pages = {68--79},
	Title = {On the multidimensional extension of countermonotonicity and its applications},
	Volume = {56},
	Year = {2014},}

@article{Cheung_Lo_2014,
	Author = {Ka Chun Cheung and Ambrose Lo},
	Journal = {Insurance: Mathematics and Economics},
	Pages = {180 - 190},
	Title = {Characterizing mutual exclusivity as the strongest negative multivariate dependence structure},
	Volume = {55},
	Year = {2014},}

@article{Chen2015,
	author = {Chen, Xinliang and Deelstra, Griselda and Dhaene, Jan and Linders, Daniel and Vanmaele, Michel},
	title = {On an optimization problem related to static super-replicating strategies},
	journal	= {Journal of Computational and Applied Mathematics},
	volume = {278},
	pages ={213-230},
	year = {2015b},
}

@article {Chen2008,	
	author = {Chen, Xinliang and Deelstra, Griselda and Dhaene, Jan and Vanmaele, Mich\`{e}le},	
	title = {Static Super-Replicating Strategies for a Class of Exotic Options},	
	journal = {Insurance: Mathematics and Economics},	
	volume = {42},	
	pages = {1067--1085},
	year = {2008},
}

@article{Bignozzi_et_al_2015,
	author = {Bignozzi, V. and Puccetti, G. and and L. R\"uschendorf},
	year		=	{ 2015},
	title 		=	{Reducing model risk via positive and negative dependence assumptions},
	journal	= { Insurance Mathematics \& Economics}, 
	volume	= {61},
	pages		= {17-26},
}

\end{document}